\newtheorem{theorem}{Theorem}[section]
\newtheorem*{theorem*}{Theorem}
\newtheorem{remark}[theorem]{Remark}
\newtheorem{proposition}[theorem]{Proposition}
\newtheorem{corollary}[theorem]{Corollary}
\newtheorem{lemma}[theorem]{Lemma}
\newtheorem{theoremintr}{Theorem}
\numberwithin{equation}{section}
\def\Q{\mathbb{Q}}
\def\Z{\mathbb{Z}}
\def\i{\sqrt{-1}}
\newcommand{\Mod}[1]{\ (\mathrm{mod}\ #1)}
\newcommand*\xbar[1]{%
	\hbox{%
		\vbox{%
			\hrule height 0.5pt 
			\kern0.5ex
			\hbox{%
				\kern-0.1em
				\ensuremath{#1}%
				\kern-0.1em
			}%
		}%
	}%
}
\def\cSn{\xbar{\mathfrak{S}}_n}
\begin{document}

	\title{Special values of spectral zeta functions of graphs and Dirichlet $L$-functions
		\footnote{This work is supported by the National Natural Science Foundation of China (Grant 12071371).}
	}
	\author{Bing Xie\footnote{B. Xie, School of Mathematics and Statistics, Shandong University, Weihai 264209, China.
			Email: {\tt xiebing@sdu.edu.cn}}.,
		Yigeng Zhao\footnote{Y.G. Zhao, School of Science, Westlake University, Hangzhou 310024, China.
			Email: {\tt zhaoyigeng@westlake.edu.cn   }}.,
		and Yongqiang Zhao\footnote{Y.Q. Zhao, School of Science, Westlake University, Hangzhou 310024, China.
			Email: {\tt zhaoyongqiang@westlake.edu.cn  }}.
	}
	\date{\today}

	\maketitle
	
	\begin{abstract}
		In this paper, we establish relations between special values of Dirichlet $L$-functions and those of spectral zeta functions or $L$-functions of cycle graphs. In fact, they determine each other in a natural  way.
		These two kinds of special values are bridged together by a combinatorial derivative formula obtained from studying spectral zeta functions of the first order self-adjoint differential operators on the unit circle.

		\vspace{04pt}
		
		\vspace{04pt}

		\noindent{\it Keywords}:  Special values, Dirichlet $L$-function, Cycle graph, Spectral zeta function, Self-adjoint differential operator, Mercer's Theorem, Green function.
	\end{abstract}

	\section{Introduction}
	\subsection{Special values of Riemann zeta function and of Dirichlet $L$-functions}
	
	In 1735, Euler proved that the special values of the Riemann zeta function for any  $n\in \mathbb{N}$
	\begin{equation}\label{equ:Eulerzeta}
		\zeta(2n):=\sum_{k=1}^\infty\frac{1}{k^{2n}} =\frac{(-1)^{n+1}(2\pi )^{2n}}{2\cdot (2n)!}B_{2n}\in \pi^{2n}\mathbb{Q},
	\end{equation}
	where $B_k$ denotes the $k$-th Bernoulli number, given via the generating function
	\[ \frac{t}{e^t-1}=\sum_{k=0}^{\infty} \frac{B_kt^k}{k!}.\]
	He also showed for a special Dirichlet $L$-series 
	\begin{equation}\label{equ:EulerDirichlet}
		L(2n+1,\chi_4):=\sum_{k=0}^\infty\frac{(-1)^k}{(2k+1)^{2n+1}} =(-1)^n\frac{\pi^{2n+1}E_{2n}}{2^{2n+2}(2n)!}\in \pi^{2n+1}\mathbb{Q},
	\end{equation}
	where $E_{2n}$ is the $2n$-th secant number, given by the formula
	\[ \sec(t)=   \sum_{n=0}^{\infty} \frac{(-1)^nE_{2n}}{(2n)!}t^{2n}. \]
	In 1940, Hecke \cite{Hecke} first extended the above results to Dirichlet $L$-series
	\begin{equation*}
		L(s,\chi):=\sum_{k=1}^{+\infty}\frac{\chi(k)}{k^s},
	\end{equation*}
	for $s\in\mathbb{C}$ with $\Re(s)>1$ and  real quadratic Dirichlet character $
	\chi: (\mathbb{Z}/N\mathbb{Z})^{\times}\rightarrow \mathbb{C}^\times.
	$
	For a general Dirichlet character, Leopoldt \cite{Leop} studied the values of $L(n,\chi)$, using the generalize Bernoulli numbers, in the case that $n$ and $\chi$ satisfy the parity condition (i.e., $\chi(-1)=(-1)^n$) in 1958.
	More precisely, if $\chi \Mod N$ is a primitive character  and $n$ a positive integer satisfying the parity condition $\chi(-1)=(-1)^n$, then
	\begin{equation}\label{Leopoldt}
		L(n, \chi)=\frac{-\chi(-1)G(\chi)(2\pi \i)^n }{2N n!}\sum^{N}_{j=1}\bar{\chi}(j){\mathbf B}_n(\frac{j}{N}),
	\end{equation}
	where $G(\chi)= \sum_{1\leq a \leq N} \chi(a)e^{\frac{2\pi \i a}{N}}$ is the Gauss sum and ${\mathbf B}_n(x)=\sum_{0\leq j \leq n} {n \choose j} B_j  x^{n-j}$ the $n$-th Bernoulli polynomial.	
	
	Note that formula \eqref{Leopoldt} only holds for primitive characters!  For a non-primitive character,  one needs to lift it to the primitive case to find the special values.  In our 
	Theorems \ref{thm:zeta-values} and \ref{thm:L-to-LN}  below,  we give new formulae for special values of Dirichlet $L$-function which work for arbitrary characters.	
	\vskip 3mm
	\subsection{Spectral zeta functions of cycle graphs $\Z/N\Z$}	
	The \textit{cycle graph} $C_N$ is a simple graph that consists $N$ vertices connected in a closed chain, which might be visualized as the  $N$-polygon in a plane. It is one of the most fundamental graphs in graph theory, and is also one of the simplest and the most basic examples of Cayley graphs. Recall,  a \textit{Cayley graph} $(G, S)$ with a given group $G$ and a symmetric generating set $S\subseteq G$ (i.e. $S=S^{-1}$) is the graph which has the elements of $G$ 
	as vertex sets and two vertices $g_1$ and $g_2$ are adjacent if there is an element $s\in S$ such that $g_1=g_2s$. Note that the cycle graph $C_N$ is just the Cayley graph with $G=\Z/N\Z$ and generating set $S=\{1,-1\}$.	
	\vskip 3mm
	
	\noindent For the cycle graph $C_N$, its Laplacian operator on $L^2(V)$, $V=\Z/N\Z$ the vertex set,  is given by
	\begin{equation}
		\Delta f(x):=\frac{1}{4}(2f(x)-f(x+1)-f(x-1)),
	\end{equation}
	for any $f \in L^2(V)$ and $x\in \Z/N\Z$. For $m\in \{1,2,\cdots, N-1\}$, a direct computation shows that
	\[ \eta_m=(e^{\frac{2m\pi \i}{N}},   e^{\frac{4m\pi \i}{N}}, \cdots,e^{\frac{2Nm\pi \i}{N}}) \]
	is  the eigenvector associated  with the eigenvalue $\sin^2(\frac{m\pi}{N})$. The spectral zeta function of the graph $C_N$  is defined as 
	\begin{equation*}
		\zeta_{\Z/N\Z}(s)=\sum_{m=1}^{N-1} \frac{1}{\sin^{2s}(\frac{m\pi}{N})}.
	\end{equation*}
	For any Dirichlet character $\chi: (\mathbb{Z}/N\mathbb{Z})^{\times}\rightarrow \mathbb{C}^\times$, we define the Dirichlet $L$-function for the cycle graph $C_N$ as
	\begin{align*}
		L_{\Z/N\Z}(s,\chi)=\sum_{m=1}^{N-1} \frac{\chi(m)}{\sin^{2s}(\frac{m\pi}{N})}.
	\end{align*}
	As an analogue of graph $L$-function, we also introduce the following notation
	\begin{align*}
		\widetilde{L}_{\Z/N\Z}(s,\chi)=\sum_{m=1}^{N-1} \frac{\chi(m)\cot (\frac{m\pi}{N})}{\sin^{2s}(\frac{m\pi}{N})}.	
	\end{align*}	
	The spectral zeta functions for cycle graphs $C_N$ and Dirichlet $L$-functions of  cycle graphs are not new and have already appeared in the literature. For example,  they are main subjects of studies in the recent 
	interesting papers \cite{KN06}, \cite{CJK},  \cite{Fri-Karl} and \cite{Friedli}. The main points of \cite{Fri-Karl} and \cite{Friedli} are to use $\zeta_{\Z/N\Z}(s)$ and $L_{\Z/N\Z}(s,\chi)$ to approximate the classical Riemann zeta function $\zeta(s)$ 
	and Dirichlet $L$-function $L(s, \chi)$, respectively. Especially, they reformulate the (generalized) Riemann Hypothesis in terms of these graph zeta functions or $L$-functions.
	Friedli and Karlsson emphasized that the spectral zeta function $\zeta_{\Z/N\Z}(s)$ is a more natural approximation of $\zeta(s)$ than other artificial ones,  see 
	\cite[Pages 590-591]{Fri-Karl}. Here we give another support for the importance of studying these functions from the point of view of special values. 
	Indeed, we find the special values of \eqref{Leopoldt} can also be expressed as those of the spectral zeta function of the graph $\Z/N\Z$.   	
	One of our main results is the following  formulae (cf. Theorem \ref{thm:L-to-LN} and Proposition \ref{spectral-to-zeta-value}):
	
		\begin{theoremintr}\label{main-thm-Dirichlet}
			For any even character $\chi   \Mod N$ and any integer $n\geq 1$, we have
			\begin{align*}
				L(2n,\chi)=\frac{\pi^{2n}}{ 2(2n-1)!N^{2n}} \sum_{i=1}^{n}a_{n,i} L_{\Z/N\Z}(i,\chi),
			\end{align*}
		and
		\begin{align*}
			 L_{\Z/N\Z}(n,\chi)=2\sum_{i=1}^{n}c_{n,i}\frac{N^{2i}}{\pi^{2i}}L(2i,\chi),
		\end{align*}
		where each $c_{n,i}$ is the coefficient of $x^{-2i}$ in the Laurent expansion of  $(\sin x)^{-2n}$ and  $a_{n, i}$ satisfy
			\[  \sum_{i=0}^{n-s}a_{n,n-i}c_{n-i,s}=\begin{cases*}
			(2n-1)! & if $s=n$,\\
			0 & if $1\leq s \leq n-1$,
		\end{cases*}\]
		and can be explicitly given in a combinatorial way ( see \eqref{eq:coefficient_a_ni} in Section \ref{spectal-zeta}).
	\end{theoremintr}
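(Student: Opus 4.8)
My plan is to derive both identities from a single Mittag--Leffler expansion of $\sin^{-2n}(\pi z)$, which plays the role of the combinatorial derivative formula advertised in the abstract (and which can equally be produced from the Green function of a first order self-adjoint operator on the circle via Mercer's theorem). For $i\geq 1$ set $g_i(z)=\sum_{k\in\Z}(z-k)^{-2i}$, an absolutely convergent series defining a function of period $1$ whose principal part at each integer $k$ is $(z-k)^{-2i}$. Since $\sin^{-2n}(\pi z)$ is an even function of $z$, its Laurent expansion at $z=0$ has principal part $\sum_{i=1}^n c_{n,i}(\pi z)^{-2i}=\sum_{i=1}^n \frac{c_{n,i}}{\pi^{2i}}z^{-2i}$, where $c_{n,i}$ is exactly the coefficient in the statement. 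The first step is therefore to prove
\[ \frac{1}{\sin^{2n}(\pi z)}=\sum_{i=1}^{n}\frac{c_{n,i}}{\pi^{2i}}\,g_i(z). \]
Both sides are meromorphic of period $1$ with identical principal parts at every integer (by periodicity it suffices to match them at $z=0$, which holds by the very choice of $c_{n,i}$), so their difference is entire and periodic; since both sides tend to $0$ as $\mathrm{Im}(z)\to\pm\infty$, this difference is a bounded entire function vanishing at imaginary infinity, hence identically $0$ by Liouville.

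With the expansion in hand, the second formula follows by specialization and averaging. Evaluating at $z=m/N$ and clearing denominators gives
\[ \frac{1}{\sin^{2n}(m\pi/N)}=\sum_{i=1}^{n}\frac{c_{n,i}N^{2i}}{\pi^{2i}}\sum_{k\in\Z}\frac{1}{(m-kN)^{2i}}. \]
Multiplying by $\chi(m)$ and summing over $m=1,\dots,N-1$, I would use that $\chi$ is $N$-periodic (so $\chi(m)=\chi(m-kN)$) to reindex the double sum by the single variable $j=m-kN$, which runs exactly once through all integers not divisible by $N$; the residues $j\equiv 0\Mod{N}$ contribute nothing since $\chi$ vanishes off units, so one may freely include them. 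Finally, because $\chi$ is \emph{even} and $j\mapsto j^{2i}$ is even, the $j$ and $-j$ terms coincide and the sum collapses to $2\sum_{j\geq 1}\chi(j)j^{-2i}=2L(2i,\chi)$, yielding $L_{\Z/N\Z}(n,\chi)=2\sum_{i=1}^n c_{n,i}\frac{N^{2i}}{\pi^{2i}}L(2i,\chi)$.

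It remains to invert this relation to obtain the first formula. Writing $C=(c_{j,s})_{j,s}$, the identity just proved says the vector $\bigl(\tfrac{N^{2i}}{\pi^{2i}}L(2i,\chi)\bigr)_i$ is sent to $\bigl(L_{\Z/N\Z}(n,\chi)\bigr)_n$ by $2C$. The matrix $C$ is lower triangular with diagonal entries $c_{n,n}=1$ (the leading coefficient of $x^{-2n}$ in $(\sin x)^{-2n}$, since $\sin x=x(1-x^2/6+\cdots)$), hence invertible. Defining $a_{n,i}=(2n-1)!\,(C^{-1})_{n,i}$ --- equivalently, declaring the $a_{n,i}$ to be the unique numbers satisfying the orthogonality relation $\sum_{j=s}^{n}a_{n,j}c_{j,s}=(2n-1)!\,\delta_{n,s}$ stated in the theorem --- and solving the triangular system gives $\frac{N^{2n}}{\pi^{2n}}L(2n,\chi)=\frac12\sum_{i=1}^n (C^{-1})_{n,i}L_{\Z/N\Z}(i,\chi)$; multiplying by $\pi^{2n}/N^{2n}$ and substituting $(C^{-1})_{n,i}=a_{n,i}/(2n-1)!$ yields precisely the first displayed identity. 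The explicit combinatorial description of the $a_{n,i}$ then follows by expanding $C^{-1}$ and is recorded in \eqref{eq:coefficient_a_ni}.

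The only genuinely analytic point is the first step, and that is where I expect the main difficulty: one must justify the absolute convergence used to interchange the $m$- and $k$-summations (harmless since $2i\geq 2$) and, more delicately, verify that the entire periodic remainder in the Mittag--Leffler expansion truly vanishes rather than leaving a constant. Everything afterward is the linear algebra of inverting a unitriangular matrix, for which the stated orthogonality relation is exactly the inversion condition.
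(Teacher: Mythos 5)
Your proof is correct, but it runs in the opposite direction from the paper's and uses different machinery. The paper first proves the $L(2n,\chi)$ formula directly, with $a_{n,i}$ given explicitly by \eqref{eq:coefficient_a_ni}: it derives the identity $\sum_k(2k\pi+\alpha)^{-n}=\frac{1}{2^n(n-1)!\sin^n(\alpha/2)}\sum_l\cos((n/2-l)\alpha)\overline{A}(n,l)$ from the Green function of a first order operator and Mercer's theorem (Theorem \ref{Mercer-torus}), feeds this into the Hurwitz-zeta decomposition of $L(n,\chi)$ (Theorem \ref{thm:zeta-values}), and converts the cosines into powers of $\sin$ via Chebyshev polynomials (Theorem \ref{thm:L-to-LN}); it then obtains the orthogonality relation $A\cdot C=\mathrm{diag}\{1!,3!,\dots,(2n-1)!\}$ by specializing to principal characters and invoking Euler's formula \eqref{equ:Eulerzeta} together with Zagier's evaluation of Verlinde numbers (Lemma \ref{Lemma_Zagier}, Proposition \ref{pro:A-and-C}), and only at the end inverts the triangular system to get $L_{\Z/N\Z}(n,\chi)=2\sum_i c_{n,i}\frac{N^{2i}}{\pi^{2i}}L(2i,\chi)$ (Proposition \ref{spectral-to-zeta-value}). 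You do the reverse: you prove that last identity first, via the Mittag--Leffler/Liouville expansion $\sin^{-2n}(\pi z)=\sum_i c_{n,i}\pi^{-2i}g_i(z)$ and character summation --- in effect an independent, character-twisted proof of (a generalization of) Zagier's lemma rather than a citation of it --- and then \emph{define} $a_{n,i}=(2n-1)!\,(C^{-1})_{n,i}$, so that the orthogonality relation and the $L(2n,\chi)$ formula follow by pure linear algebra. Your route is shorter and self-contained (no operator theory, no Eulerian numbers, no reference to \cite{Zagier}), and it cleanly isolates the one analytic step; since $C$ is unitriangular, the orthogonality relation does determine the $a_{n,i}$ uniquely, so the two displayed identities of the theorem are fully established. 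What your route does not deliver is the explicit combinatorial expression \eqref{eq:coefficient_a_ni}: your closing remark that it ``follows by expanding $C^{-1}$'' is an assertion, not a proof --- identifying $(2n-1)!\,(C^{-1})_{n,i}$ with the Eulerian-number/Chebyshev sum is precisely the nontrivial content of combining Theorem \ref{thm:L-to-LN} with Proposition \ref{pro:A-and-C}, and it is the ``combinatorial bridge'' the paper advertises. So treat that parenthetical claim of the theorem as unproven in your write-up, or supplement it with the paper's Mercer-theorem argument.
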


	
Special values $L(2n+1,\chi)$ for an odd character are related to the  special values $\widetilde{L}_{\Z/N\Z}(s,\chi)$ similarly.

As a consequence, we also have similar relations between special values of the Riemann zeta function  and of the spectral zeta functions of the cycle graphs (cf. Theorem \ref{thm:zeta-spectral values}) : 
	
	\begin{theoremintr} 
		For any integers $N\geq 2$ and $n\geq 1$, we have
		\begin{align*}
			(N^{2n}-1)\zeta(2n)=\frac{\pi^{2n}}{2(2n-1)!}\sum_{i=1}^{n}a_{n,i} \zeta_{\Z/N\Z}(i),
		\end{align*}
	and
	\begin{equation*}
	\zeta_{\Z/N\Z}(n)=2\sum_{i=1}^{n}c_{n,i}\frac{(N^{2i}-1)}{\pi^{2i}}\zeta(2i).
\end{equation*}	
		where $a_{n, i}$ and $c_{n,i}$  are the same as  given in Theorem \ref{main-thm-Dirichlet}.
	
	\end{theoremintr}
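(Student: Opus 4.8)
The plan is to prove the second identity directly and then obtain the first by inverting a triangular linear system, in complete parallel with the proof of Theorem \ref{main-thm-Dirichlet}. The only new feature is that the summation weight here is the constant function $1$ on $\{1,\dots,N-1\}$ rather than a Dirichlet character, so the terms that a character would annihilate must instead be removed by hand; this is exactly what turns the factor $N^{2i}$ of Theorem \ref{main-thm-Dirichlet} into $N^{2i}-1$. The key input, already used to prove Theorem \ref{main-thm-Dirichlet}, is the Mittag--Leffler expansion attached to the combinatorial derivative formula: writing the Laurent expansion $\dfrac{1}{\sin^{2n}x}=\sum_{i=1}^{n}c_{n,i}\,x^{-2i}+(\text{even holomorphic part})$ and rescaling the argument, one has
\[
\frac{1}{\sin^{2n}(\pi z/N)}=\sum_{i=1}^{n}c_{n,i}\Big(\frac{N}{\pi}\Big)^{2i}\sum_{k\in\Z}\frac{1}{(z-kN)^{2i}} .
\]
Indeed both sides are $N$-periodic and even, are meromorphic with poles only at $z\in N\Z$, and have there the same principal parts (dictated by the $c_{n,i}$), while both tend to $0$ as $|\Im z|\to\infty$; hence their difference is a bounded entire function, necessarily $0$. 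I would take this expansion as given, since it is the same one established for Theorem \ref{main-thm-Dirichlet}.

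Next I would prove the second formula. Evaluating the expansion at $z=m$ for $m=1,\dots,N-1$ (none of which is a pole), summing over $m$, and interchanging the finite sum with the absolutely convergent inner sum gives
\[
\zeta_{\Z/N\Z}(n)=\sum_{i=1}^{n}c_{n,i}\Big(\frac{N}{\pi}\Big)^{2i}\sum_{m=1}^{N-1}\sum_{k\in\Z}\frac{1}{(m-kN)^{2i}} .
\]
As $m$ runs over $\{1,\dots,N-1\}$ and $k$ over $\Z$, the integer $m-kN$ runs exactly once over all integers not divisible by $N$, so the inner double sum equals $\sum_{N\nmid j}j^{-2i}=2\zeta(2i)-2N^{-2i}\zeta(2i)=2\,\dfrac{N^{2i}-1}{N^{2i}}\,\zeta(2i)$. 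Substituting this and cancelling $N^{2i}$ yields $\zeta_{\Z/N\Z}(n)=2\sum_{i=1}^{n}c_{n,i}\frac{N^{2i}-1}{\pi^{2i}}\zeta(2i)$, the second claimed identity. The contrast with Theorem \ref{main-thm-Dirichlet} is illuminating: there $\chi(j)$ already kills every $j$ with $\gcd(j,N)>1$, in particular all multiples of $N$, whereas under the constant weight the multiples of $N$ survive in $\sum_{j\neq 0}j^{-2i}$ and must be subtracted, which is precisely the source of the extra $-1$.

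Finally, the first formula is pure linear algebra. Viewing the second formula as a triangular system expressing each $\zeta_{\Z/N\Z}(i)$ as a combination of the $\frac{N^{2s}-1}{\pi^{2s}}\zeta(2s)$ with $s\le i$, I would form $\sum_{i=1}^{n}a_{n,i}\zeta_{\Z/N\Z}(i)$, interchange the order of summation, and read off the coefficient of $\zeta(2s)$, namely $2\,\frac{N^{2s}-1}{\pi^{2s}}\sum_{i=s}^{n}a_{n,i}c_{i,s}$. By the defining relation of the $a_{n,i}$ in Theorem \ref{main-thm-Dirichlet}, after the reindexing $j=n-i$ which turns $\sum_{i=0}^{n-s}a_{n,n-i}c_{n-i,s}$ into $\sum_{j=s}^{n}a_{n,j}c_{j,s}$, this inner sum equals $(2n-1)!$ when $s=n$ and $0$ otherwise. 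Hence only the $s=n$ term survives, giving $\sum_{i=1}^{n}a_{n,i}\zeta_{\Z/N\Z}(i)=\frac{2(2n-1)!}{\pi^{2n}}(N^{2n}-1)\zeta(2n)$, and multiplying by $\frac{\pi^{2n}}{2(2n-1)!}$ produces the first identity.

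The genuinely delicate point is the justification of the Mittag--Leffler expansion with exactly the Laurent coefficients $c_{n,i}$ and the interchange of summations, but both are settled once and for all in the proof of Theorem \ref{main-thm-Dirichlet}; relative to that, the present statement needs only the elementary bijection counting the non-multiples of $N$ and the triangular inversion above, so I expect no real obstacle beyond careful bookkeeping of the subtracted $N\mid j$ terms.
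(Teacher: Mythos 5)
Your proof is correct, but it runs in essentially the opposite direction from the paper's. The paper proves the first identity \emph{first}: it specializes Theorem \ref{thm:L-to-LN} to the principal characters $\chi_{0,d}$ for every divisor $d>1$ of $N$, uses the decomposition $\zeta_{\Z/N\Z}(k)=\sum_{d\mid N}L_{\Z/d\Z}(k,\chi_{0,d})$ together with the Euler-product evaluation ${\sum_{d\mid N}}'\,d^{2n}L(2n,\chi_{0,d})=(N^{2n}-1)\zeta(2n)$, and only afterwards obtains the orthogonality relation $\sum_{i=s}^{n}a_{n,i}c_{i,s}=(2n-1)!\,\delta_{sn}$ (Proposition \ref{pro:A-and-C}) by comparing the resulting polynomial identity in $N$ with Zagier's formula (Lemma \ref{Lemma_Zagier}) and Euler's formula; the second identity then follows by inverting the triangular matrix $A$ (Proposition \ref{spectral-to-zeta-value}). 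You instead prove the second identity directly, from the Mittag--Leffler expansion of $\sin^{-2n}(\pi z/N)$ evaluated at $z=1,\dots,N-1$ plus the elementary bijection onto the integers not divisible by $N$, and then get the first identity by triangular inversion. Note that, contrary to what you assert, this expansion is \emph{not} established anywhere in the paper --- the paper's machinery is Green functions, Mercer's theorem and Chebyshev polynomials --- but your Liouville-type argument (matching principal parts, periodicity, decay as $|\Im z|\to\infty$) is complete, so your proof of the second identity is self-contained, character-free, and arguably more elementary than the paper's, which reaches it only at the end of a chain through Zagier's lemma.

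One point needs patching to avoid circularity relative to the paper's internal logic. You invoke ``the defining relation of the $a_{n,i}$'' from Theorem \ref{main-thm-Dirichlet}; but in the body of the paper the $a_{n,i}$ are \emph{defined} combinatorially by \eqref{eq:coefficient_a_ni}, and the relation $\sum_{i=s}^{n}a_{n,i}c_{i,s}=(2n-1)!\,\delta_{sn}$ is exactly Proposition \ref{pro:A-and-C}, whose proof uses the first identity of the very theorem you are proving. Fortunately your own expansion supplies an independent proof: substituting $\sin^{-2i}(\alpha/2)=\sum_{s=1}^{i}c_{i,s}2^{2s}\sum_{k\in\Z}(2k\pi+\alpha)^{-2s}$ into the identity $\sum_{k\in\Z}(2k\pi+\alpha)^{-2n}=\frac{1}{2^{2n}(2n-1)!}\sum_{i=1}^{n}a_{n,i}\sin^{-2i}(\alpha/2)$ (which is what the proof of Theorem \ref{thm:L-to-LN} extracts from Theorem \ref{Mercer-torus}), and using that the functions $\alpha\mapsto\sum_{k\in\Z}(2k\pi+\alpha)^{-2s}$, $s=1,\dots,n$, are linearly independent because they have poles of pairwise distinct orders at $\alpha=0$, one reads off precisely the needed relation. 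With that paragraph added, your argument is a complete and genuinely different proof of both identities.
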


	\subsection{First order self-adjoint differential operators}
	
	Our project begins with a study of the spectral zeta function of the first order self-adjoint
	differential operators on the unit circle $\mathbb{S}^1$ of the form
	\begin{equation}\label{one-order-G}
		T_v u:=-\i u'+v u=\lambda wu 
	\end{equation}
	with the weighted function $w \in L^{1} [0,1]$, $w> 0$, a.e. on $[0,1]$,
	and the potential function $v \in L^{1} [0,1] $. Its $k$-th eigenvalue is
	\begin{equation}\label{Lambda-k}
		\lambda_k=\frac{2k\pi+\int_0^1vdx}{\int_0^1 wdx}=:\frac{1}{c}(2k\pi+\alpha),\;k=0,\pm 1,\pm2, \cdots.
	\end{equation}
	For $\alpha\notin 2\pi\mathbb{Z}$, we can define the spectral function
	\begin{equation}\label{Spectral-Fun}
		\zeta_{T_v} (s):=\sum_{k=-\infty}^{+\infty}\frac{1}{\lambda_k^s},\; \Re(s)> 1.
	\end{equation}
	For any $s=n\in\mathbb{N}\backslash\{0\}$, thanks to Mercer's Theorem (cf. \cite[\S 3.5.4]{Courant-Hilbert}),
	we can relate the spectral series in \eqref{Spectral-Fun} with the integral of Green function of the differential operator.
	For $n\geq 3$, the calculation of the integral naturally involves classical combinatorial problems on counting  $n$-permutations.
	
	Using these facts, we study the special values of the series
	\begin{equation}\label{Series-1}
		\sum_{k=-\infty}^{+\infty}\frac{1}{(2k\pi+\alpha)^n},
		\;n\geq 1,\;\frac{\alpha}{2\pi}\notin \mathbb{Z}.
	\end{equation}
	By giving a new combinatorial interpretation, we deduce an explicit formula for this sum.
	We systematically use the classical Mercer's theorem for a differential operator, which can be viewed as a baby version of the trace formula,
	to evaluate special values.  As a corollary, we prove the above known results in a new simple way. 	
	In the particular case $\alpha=\pi/2$, we recover the above result of Euler. 
	Here, we  only consider the first order differential operator case and will treat the second order operators, i.e., Sturm-Liouville operators in \cite{xzz}.
	
	Series \eqref{Series-1} are closely related to the special values of Hurwitz zeta function and Dirichlet  $L$-functions (cf. \cite{Dav}),
	which arise out of number theory problems and other considerations.  Using the explicit formula for series \eqref{Series-1}, we deduce special
	values of Riemann zeta function and Dirichlet $L$-series, which recover Euler and Hecke's classical results.
	\vskip 3mm

	\section{Special values of spectral zeta functions of the first order differential operators}
	
	In this section we study special values of spectral zeta functions of the first order self-adjoint 
	differential operators on the unit circle $\mathbb{S}^1$  of the form \eqref{one-order-G}. It turns out
	its spectrum is quite simple and one can easily write down its spectral zeta function. Then we use the 
	classical Mercer's theorem to evaluate integral values of the spectral zeta function. This naturally 
	leads to the combinatorial problems on counting permutations with fixed difference number between descents and ascents. 
	
	In subsection \ref{Pre}, we recall basic facts of first order differential operators, especially, Green functions and Mercer's theorem.
	Basic combinatorial facts on counting permutations will be recalled in the next subsection. In the final subsection \ref{derivative}, 
	we give the main theorem of this section. 
	
	\subsection{Green function and Mercer's Theorem} \label{Pre}
	
	The first order self-adjoint differential operator	\eqref{one-order-G} is equivalent to the
	boundary value problem
	\begin{equation}\label{1-order-G}
		T_v u=-\i u'+v u=\lambda w u,\;{\rm on}\;(0,1),
	\end{equation}
	with the boundary condition
	\begin{equation}\label{one-order-BC}
		u(0)=u(1).
	\end{equation}
	Then $T_v$ is a self-adjoint operator with the domain
	$$
	\mathcal{D}(T_v)=\{u\in AC[0,1]:\;\frac{1}{w}T_v u\in L_w^2[0,1]\;{\rm and}\;u(0)=u(1)\},
	$$
	where $AC[0,1]$ represents all the absolutely continuous functions on $[0,1]$
	and
	$$
	L_w^2[0,1]:=\{u\;{\rm is\;measurable}:\;\int_0^1 |u|^2w dx<\infty\}.
	$$
	The $k$-th eigenvalue of problem \eqref{1-order-G} and \eqref{one-order-BC} is the same as
	$\lambda_k$ in \eqref{Lambda-k},
	$$ \lambda_k=\frac{1}{c}(2k\pi+\alpha),  \;\;  k=0,\pm 1,\pm2, \cdots,$$
	and the corresponding eigenfunction is
	$$\varphi_k(x)=e^{\i\int_0^x(\lambda_k w(t)-v(t)) dt}.$$
	
	Note $c\lambda_k$ is a translation of $2k\pi$ by $\alpha=\int_0^1v dx$, which is related
	to the integral of the potential function $\int_0^1v dx$. Thus, for $c=\int_0^1 w dx>0$ the operator
	$cT_v$ has the same eigenvalues with
	\begin{equation}\label{one-order}
		Tu=-\i u'+\alpha u=\lambda u,\;{\rm on}\;(0,1),
	\end{equation}
	with the boundary condition \eqref{one-order-BC}. We have obtained the following proposition.
	\begin{proposition}\label{Prop-Potential}
		For any given weighted function $w$ with $c=\int_0^1 w dx$ and  potential function
		$v(x)$ with $\alpha=\int_0^1 v dx$, the corresponding self-adjoint
		operators $cT_v$ and $T$ have the same eigenvalues $\lambda_k=2k\pi+\alpha$ and same spectral
		zeta function $\zeta_T (s)=\sum_{k=-\infty}^{+\infty}\frac{1}{\lambda_k^s}$, $\Re(s)\geq 1$,
		where the function value $\zeta_T (1)$ is understood as the limit
		$$
		\sum_{k=-\infty}^{+\infty}\frac{1}{\lambda_k}:=\lim_{N\rightarrow +\infty}\sum_{k=-N}^{N}\frac{1}{\lambda_k}
		=\lim_{N\rightarrow +\infty}\sum_{k=-N}^{N}\frac{1}{2k\pi+\alpha}.
		$$
	\end{proposition}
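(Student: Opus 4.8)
The plan is to reduce everything to an explicit integration of the first-order ODE, after which the matching of eigenvalues and of spectral zeta functions becomes transparent, with the only real care needed at the point $s=1$. First I would solve the generalized eigenvalue problem for $T_v$ directly. Since \eqref{1-order-G} is first-order and linear, its solution space for each fixed $\lambda$ is one-dimensional: separating variables in $-\i u'=(\lambda w-v)u$ gives $u(x)=u(0)\,e^{\i\int_0^x(\lambda w(t)-v(t))\,dt}$, which is exactly the eigenfunction $\varphi_k$ recorded above. Imposing the periodic boundary condition \eqref{one-order-BC}, namely $u(0)=u(1)$, forces $e^{\i\int_0^1(\lambda w-v)\,dt}=1$, i.e. $\lambda\int_0^1 w\,dt-\int_0^1 v\,dt\in 2\pi\Z$. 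With $c=\int_0^1 w\,dx$ and $\alpha=\int_0^1 v\,dx$ this reads $\lambda c-\alpha=2k\pi$, so $\lambda=\lambda_k=\frac{1}{c}(2k\pi+\alpha)$ for $k\in\Z$, and since each eigenspace is one-dimensional this exhausts the spectrum.

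Next I would run the same computation for the model operator $T$ of \eqref{one-order}, which is the special case $w\equiv 1$, $v\equiv\alpha$: here the solutions are $u(x)=u(0)\,e^{\i(\lambda-\alpha)x}$, and the boundary condition yields $\lambda-\alpha\in 2\pi\Z$, so the eigenvalues are precisely $2k\pi+\alpha$. Comparing the two lists gives $c\lambda_k=2k\pi+\alpha$, whence $cT_v$ and $T$ share the spectrum $\{2k\pi+\alpha:k\in\Z\}$ with the same (all equal to one) multiplicities. Because the spectral zeta function is built solely from the eigenvalues, the functions $\zeta_{cT_v}(s)$ and $\zeta_T(s)$ then coincide wherever both are defined, and equal $\sum_k(2k\pi+\alpha)^{-s}$.

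I would then settle convergence. For $\Re(s)>1$ the series converges absolutely by comparison with $\sum_k|2k\pi+\alpha|^{-s}\asymp\sum_k|k|^{-s}$, using $\alpha/2\pi\notin\Z$ so that no denominator vanishes; this gives a well-defined holomorphic function on $\Re(s)>1$. The delicate point is $s=1$, where absolute convergence fails and the order of summation matters. Here I would pair the $k$-th and $(-k)$-th terms, $\frac{1}{2k\pi+\alpha}+\frac{1}{-2k\pi+\alpha}=\frac{2\alpha}{\alpha^2-4k^2\pi^2}=O(k^{-2})$, so that the symmetric partial sums $\sum_{k=-N}^{N}(2k\pi+\alpha)^{-1}=\frac{1}{\alpha}+\sum_{k=1}^{N}\frac{2\alpha}{\alpha^2-4k^2\pi^2}$ converge as $N\to\infty$; this legitimizes defining $\zeta_T(1)$ by the stated symmetric limit.

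The computations are elementary, so I expect the main obstacle to be purely the bookkeeping at $s=1$: one must commit to the symmetric (principal-value) summation and verify that the paired series converges, since the one-sided tails $\sum_{k\geq 0}$ and $\sum_{k<0}$ each diverge logarithmically and the symmetric limit is the only natural way to assign a value. Alongside this I would record the standard facts that $T_v$ is self-adjoint on $L_w^2[0,1]$ with the domain $\mathcal{D}(T_v)$ given above and that the $\varphi_k$ form a complete orthogonal system (reducing to ordinary Fourier series after the substitution $y=\frac{1}{c}\int_0^x w\,dt$), which guarantee that the $\lambda_k$ constitute the entire spectrum with no continuous or residual part.
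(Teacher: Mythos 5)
Your proposal is correct and follows essentially the same route as the paper: the paper likewise solves the first-order problem explicitly, obtaining the eigenfunctions $\varphi_k(x)=e^{\i\int_0^x(\lambda_k w(t)-v(t))\,dt}$ and eigenvalues $\lambda_k=\frac{1}{c}(2k\pi+\alpha)$, and then observes that multiplying by $c$ identifies the spectrum of $cT_v$ with that of $T$, so the zeta functions agree. Your write-up simply supplies details the paper leaves implicit, namely the one-dimensionality of each eigenspace (so the spectrum is exhausted) and the principal-value convergence argument at $s=1$, both of which are sound.
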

	Therefore, without loss of generality, we consider the boundary value problems \eqref{one-order} with
	the boundary condition \eqref{one-order-BC}.
	In the following, we always assume that $\alpha\neq 2k\pi$, $k=0,\pm 1,\pm2, \cdots$,
	then $0$ is not the eigenvalues of $T$. Hence $T^{-1}$ exists and is a bounded linear operator on $L^2[0,1]$,
	and the Green function $G(s,t)$ of \eqref{one-order} at $0$ is defined as that for any $f\in L^2[0,1]$,
	\begin{equation}\label{T-Green}
	T^{-1}f(s)=\int_0^1G(s,t)f(t){\rm d}t,\;s\in[0,1]. 
	\end{equation}
	The definition is equivalent to, for any fixed $ t\in[0,1]$, 
	$$
	TG(s,t)=\delta_t(s),
	$$
	where $\delta_t(s)$ is the Delta function at $t$.  Hence the Green function satisfies
	$$
	-\i[G(t+,t)-G(t-,t)]=1,
	$$
	for any $t\in(0,1)$,  where $G(t\pm,t):=\lim_{\varepsilon\rightarrow 0+}G(t\pm\varepsilon,t)$.
	By the definition, we get the Green function of $T$ at $0$,
\begin{align}\label{Green-1-T1}
	G(s,t)=
		\begin{cases} 
			\frac{e^{\i\alpha(t-s)}}{2\sin(\alpha/2)	}e^{-\frac{1}{2}\i\alpha}, & 0\leq s< t\leq 1,\\
				\frac{e^{\i\alpha(t-s)}}{2\sin(\alpha/2)	}e^{\frac{1}{2}\i\alpha}, & 0\leq t< s\leq 1.
			\end{cases}
	\end{align}

	Using eigenfunctions $\{\varphi_k(x)=e^{2k\pi \i x}$, $k=0,\pm 1,\pm2, \cdots\}$ of the operator $T$, we  get
	a series representation of $G(s,t)$. In fact, for any fixed $s\in[0,1]$, we can expand $G(s,t)$ by
	Fourier series
	$$
	G(s,t)
	=\sum_{k=-\infty}^{+\infty}\phi_k(s)\overline{\varphi_k(t)},
	$$
	where the Fourier coefficient
	$$
	\phi_k(s)=\langle G(s,t),\overline{ \varphi_k(t)}\rangle:=\int_0^1 G(s,t)\varphi_k(t){\rm d}t
	=\frac{\varphi_k(s)}{\lambda_k},
	$$
	by \eqref{T-Green}. We have obtained
	\begin{equation}\label{Green-Series}
		G(s,t)
		=\sum_{k=-\infty}^{+\infty}\frac{\varphi_k(s)\overline{\varphi_k(t)}}{\lambda_k}
		=\sum_{k=-\infty}^{+\infty}\frac{\varphi_k(s)\overline{\varphi_k(t)}}{2k\pi+\alpha}
		=\sum_{k=-\infty}^{+\infty}\frac{\eta^k}{2k\pi+\alpha},
	\end{equation}
	where $\eta:=\eta(s,t):=e^{2\i\pi(t-s)}$.
	
	Note for any fixed $s$ (or $t$), due to the discontinuity of the Green function $G(s,t)$,
	the series \eqref{Green-Series} is  not uniformly convergent. We can use the method in \cite[\S 2.1]{Weil}
	to deal with this issue. Let‘s consider the limit
	$$
	G(s,t)
	=\lim_{N\rightarrow +\infty}\sum_{k=-N}^{N}\frac{\varphi_k(s)\overline{\varphi_k(t)}}{\lambda_k}.
	$$
	For any given  $\varepsilon>0$, we have
	\begin{align*}
		\frac{1}{2}\left[G(t+\varepsilon,t)+G(t-\varepsilon,t)\right]
		&=\Re\left[ G(t+\varepsilon,t)\right]\\
		&=\Re\left[
		\lim_{N\rightarrow +\infty}\sum_{k=-N}^{N}\frac{\varphi_k(t+\varepsilon)\overline{\varphi_k(t)}}{\lambda_k}
		\right]\\
		&=\frac{1}{\alpha}+\sum_{k=1}^{\infty}\frac{-2\alpha\cos(2k\pi\varepsilon)}{(2k\pi)^2-\alpha^2},
	\end{align*}
	and the last series uniformly converges. Therefore,
	$$
	\frac{1}{2}[G(t+,t)+G(t-,t)]
	=\frac{1}{\alpha}+\sum_{k=1}^{\infty}\frac{-2\alpha}{(2k\pi)^2-\alpha^2}
	=\lim_{N\rightarrow +\infty}\sum_{k=-N}^{N}\frac{1}{2k\pi+\alpha}.
	$$
	Thanks to the uniform convergence,  we apply Mercer's Theorem to the first order differential operator \eqref{one-order} to get
	\begin{equation}\label{T-Mercer}
		\lim_{N\rightarrow +\infty}\sum_{k=-N}^{N}\frac{1}{2k\pi+\alpha}
		=\int_0^1\frac{1}{2}[G(t+,t)+G(t-,t)]{\rm d}t
		=\frac{1}{2}\cot\left(\frac{\alpha}{2}\right).
	\end{equation}

	For $n\geq 2$, Mercer's Theorem (cf. \cite[\S 3.5.4]{Courant-Hilbert}) tells us that
	\begin{align}\label{equ:mercer}
		\sum_{k=-\infty}^{+\infty}\frac{1}{(2k\pi+\alpha)^n}
		&=\int_0^1\cdots\int_0^1G(x_1,x_2)\cdots G(x_n,x_1){\rm d}x_1\cdots{\rm d}x_n.
	\end{align}
	Thus in order to evaluate series \eqref{Series-1}, it is enough to calculate the integral on the right hand side of \eqref{equ:mercer}. 	
	For $n=2$,  we just notice that
	$$
	2^2\sin^2(\frac{\alpha}{2})G(x_1,
	x_2)G(x_2,x_1)=1.
	$$
	Hence,
	\begin{align}\label{T2-Mercer}
		\sum_{k=-\infty}^{+\infty}\frac{1}{(2k\pi+\alpha)^2}
		=\int_0^1\int_0^1G(x_1,x_2)G(x_2,x_1){\rm d}x_1{\rm d}x_2
		=\frac{1}{2^2\sin^2(\alpha/2)}.
	\end{align}	
	
	However, to calculate the right hand side of (\ref{equ:mercer}) for general $n\geq 3$, we need to compare the consecutive $x_j$ and $x_{j+1}$ and then get an explicit integrand. More precisely, by noting that
	$$
	e^{\i\alpha(x_{1}-x_{2})}\cdots e^{\i\alpha(x_{n}-x_{1})}=1,
	$$
	we have
	\begin{equation}\label{equ:Green}
		G(x_1,x_2)\cdots G(x_n,x_1)=\prod_{j=1}^{n}G(x_j, x_{j+1})=\frac{1}{2^n\sin^{n}(\frac{\alpha}{2})}e^{\frac{1}{2}\i\alpha (\sharp\{j | x_j>x_{j+1}\}-\sharp\{ j | x_j<x_{j+1}\})}
	\end{equation}
	where we set $x_{n+1}=x_1$ for our convenience, and $\sharp A$ means the cardinality of a finite set $A$.
	This comparison problem naturally leads to a classical combinatorial problem on counting permutations.
	
	\subsection{Combinatorics}
	
	A linear ordering of the elements
	of the set $\{1,\cdots,n\}$ is called an $n$-permutation and denote the set of all
	$n$-permutations  $\sigma= \sigma(1)\cdots\sigma(n) $ as $\mathfrak{S}_n$. If we set $\sigma(0)=\sigma(n)$ and $\sigma(n+1)=\sigma(1)$,
	then it is called a \textit{circular $n$-permutation}, and denoted as $\sigma=(\sigma(1),\cdots,\sigma(n) )$. The set of all circular $n$-permutations
	is denoted by  $\cSn$.
	For  $\sigma=(\sigma(1), \cdots,\sigma(n)) \in \cSn$,  we call $\sigma(i)$ is an \textit{ascent}
	(resp. \textit{descent}) if  $ \sigma(i)<\sigma(i+1)$ (resp.  $\sigma(i)>\sigma(i+1)$).
	
	To determine the integrand in \eqref{equ:mercer}, we need to compute the following  class number $M_m$ for  all possible $2-n\leq m \leq n-2$:
	
	$$
	M_m:=\sharp\left\{\sigma\in\cSn \bigg|\;m
	=\sharp\{ {\rm descents\; of}\; \sigma\}-\sharp\{ {\rm ascents\; of}\; \sigma\}\right\}.
	$$
	In the following, we show this class number is just the classical Eulerian numbers.
	Let
	$$
	A(n,l):=\sharp\left\{\sigma\in\mathfrak{S}_n \bigg|\;\sharp\{ {\rm descents\; of}\; \sigma\}=l-1\right\},
	$$
	and
	$$
	\overline{A}(n,l):=\sharp\left\{\sigma\in\cSn \bigg|\;\sharp\{ {\rm descents\; of}\; \sigma\}=l\right\},
	$$
	where the definition of $A(n,l)$ is given in \cite[\S1.1.2, p.4]{Bona}.
	
	\begin{proposition}( {\cite[Theorem 1.11]{Bona}})\label{Bona-Thm}
		For integers $n\geq 1$ and $l$ satisfying $l\leq n$,
		we have
		$$
		A(n,l)=\sum_{i=0}^{l} (-1)^i{n+1 \choose i}
		(l-i)^{n}.
		$$
	\end{proposition}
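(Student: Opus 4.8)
The plan is to recognize $A(n,l)$ as the classical Eulerian number counting the permutations in $\mathfrak{S}_n$ with exactly $l-1$ descents, and to obtain the closed form in two stages: first a combinatorial identity of Worpitzky type that expands the power $m^n$ as a nonnegative combination of the $A(n,l)$, and then a purely formal inversion of that identity by means of ordinary generating functions to isolate $A(n,l)$.

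The first stage is to prove the Worpitzky identity
\[ m^n=\sum_{l=1}^{n}A(n,l)\binom{m+n-l}{n}\qquad(m\ge 1). \]
The key is a bijection between the $m^n$ functions $f\colon\{1,\dots,n\}\to\{1,\dots,m\}$ and the pairs $(\sigma,b)$, where $\sigma\in\mathfrak{S}_n$ and $b=(b_1\le\cdots\le b_n)$ is a weakly increasing sequence in $\{1,\dots,m\}$ that increases \emph{strictly} at every descent of $\sigma$. Given $f$, I would let $\sigma$ be the unique permutation that sorts the indices by $f$-value, breaking ties by increasing index, and set $b_i=f(\sigma(i))$; the tie-breaking rule forces a strict jump exactly at the positions where $\sigma$ has a descent, which is what makes the map a bijection. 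For a fixed $\sigma$ with $l-1$ descents, absorbing the $l-1$ forced strict increases by the standard shift shows that the number of admissible $b$ equals $\binom{(m-(l-1))+n-1}{n}=\binom{m+n-l}{n}$, a routine stars-and-bars count. Summing over $\sigma$ and grouping by the number of descents yields the displayed identity for every positive integer $m$ (the case $m=0$ being trivial for $n\ge 1$).

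With Worpitzky in hand, the second stage passes to generating functions. Using $\sum_{m\ge0}\binom{m+n-l}{n}x^m=x^{l}/(1-x)^{n+1}$ (the omitted low-order terms drop out because the corresponding binomial coefficients vanish), the identity becomes
\[ \sum_{m\ge 0}m^n x^m=\frac{\sum_{l=1}^{n}A(n,l)\,x^{l}}{(1-x)^{n+1}}. \]
Multiplying through by $(1-x)^{n+1}=\sum_{i\ge0}(-1)^i\binom{n+1}{i}x^i$ and reading off the coefficient of $x^l$ produces
\[ A(n,l)=\sum_{i=0}^{l}(-1)^i\binom{n+1}{i}(l-i)^n, \]
where the range of $i$ is cut at $l$ since $(l-i)^n$ is the coefficient of $x^{l-i}$ in $\sum_{m}m^n x^m$ and hence vanishes for $i>l$, while the boundary term $i=l$ contributes $0^n=0$. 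This is precisely the asserted formula.

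I expect the combinatorial bijection of the first stage to be the only substantive point: one must check carefully that the tie-breaking is arranged so that the strict jumps of $b$ coincide exactly with the descents of $\sigma$, and then perform the stars-and-bars count for fixed $\sigma$. Once the Worpitzky identity is established, the generating-function inversion of the second stage is entirely mechanical.
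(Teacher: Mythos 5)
Your proposal is correct, but there is nothing in the paper to compare it against: the paper does not prove Proposition \ref{Bona-Thm} at all, it simply quotes it from B\'ona's book as \cite[Theorem 1.11]{Bona}. So what you have done is supply the missing textbook argument, and it is complete. Stage one is sound: with lexicographic tie-breaking by index, a strict jump $b_i<b_{i+1}$ is forced exactly at the descents of $\sigma$, and conversely strictness at descents is precisely what guarantees that sorting the pairs $(f(j),j)$ recovers $(\sigma,b)$, so the map is a bijection; the shift absorbing the $l-1$ forced jumps then gives the count $\binom{m+n-l}{n}$ by stars and bars, whence Worpitzky's identity $m^n=\sum_{l=1}^{n}A(n,l)\binom{m+n-l}{n}$ for every integer $m\ge 0$. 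Since this holds for all $m\ge 0$ it is an identity of formal power series, and the inversion in stage two is indeed mechanical; the one wrinkle --- why the convolution stops at $i=l$ --- you handle correctly (the series $\sum_m m^n x^m$ has no negative-power terms, so terms with $i>l$ simply do not occur, and the boundary term $i=l$ carries $0^n=0$), giving the stated formula for $1\le l\le n$ (for $l\le 0$ both sides vanish trivially, which covers the paper's loose hypothesis ``$l\le n$''). One could finish equivalently by applying an $(n+1)$-st finite difference to Worpitzky's identity instead of generating functions; it is the same computation. A point worth making explicit: your sorting bijection is the discrete counterpart of the very mechanism by which Eulerian numbers enter this paper, namely the decomposition in subsection \ref{derivative} of the cube $[0,1]^n$ into simplices $x_{\sigma(1)}<x_{\sigma(2)}<\cdots<x_{\sigma(n)}$ indexed by $\sigma\in\mathfrak{S}_n$; sorting the values of $f$ with tie-breaking is exactly the discrete analogue of sorting the integration variables. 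So your argument buys a self-contained proof whose key idea is already implicit in the paper's own Section 2.
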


	For any $\sigma\in\cSn $ with $\sharp\{ {\rm descents\; of}\; \sigma\}=l$, $\sigma$ can be
	written as $\sigma=(n,\sigma_{n-1})$, where $\sigma_{n-1}\in\mathfrak{S}_{n-1}$
	with $\sharp\{ {\rm descents\; of}\; \sigma_{n-1}\}=l-1$. Hence $\overline{A}(n,l)=A(n-1,l)$
	and using this proposition, we can get the similar conclusion about
	$\overline{A}(n,l)$.
	
	\begin{lemma}\label{A-Property}
		For  integers $n\geq 2$ and $l$ satisfying $l\leq n-1$, we have
		\begin{itemize}
			\item[(i)] $\overline{A}(n,l)=\sum\limits_{i=0}^{l} (-1)^i{n \choose i}
			(l-i)^{n-1}.
			$
			\item[(ii)] $\overline{A}(n,l)=\overline{A}(n,n-l). $
		\end{itemize}	
	\end{lemma}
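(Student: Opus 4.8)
The plan is to treat the two parts separately: part (i) follows almost immediately from the material already assembled, while part (ii) reduces to a clean symmetry that is best seen bijectively.

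For (i), I would combine the identity $\overline{A}(n,l)=A(n-1,l)$ — obtained just above from the decomposition $\sigma=(n,\sigma_{n-1})$ of a circular permutation with its maximum rotated to the front — with the closed formula of Proposition \ref{Bona-Thm}. Applying that proposition with $n$ replaced by $n-1$ gives
\[
A(n-1,l)=\sum_{i=0}^{l}(-1)^i\binom{(n-1)+1}{i}(l-i)^{n-1}=\sum_{i=0}^{l}(-1)^i\binom{n}{i}(l-i)^{n-1},
\]
and since $\overline{A}(n,l)=A(n-1,l)$ this is exactly the asserted formula. The only thing to check is that the hypotheses $n\geq 2$, $l\leq n-1$ keep us inside the range ($n-1\geq 1$ and $l\leq n-1$) in which Proposition \ref{Bona-Thm} applies, which they do.

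For (ii) the key observation is that a circular $n$-permutation has exactly $n$ cyclically consecutive pairs $(\sigma(i),\sigma(i+1))$ (with $\sigma(n+1)=\sigma(1)$), each of which is either an ascent or a descent; hence $\sharp\{\text{descents of }\sigma\}+\sharp\{\text{ascents of }\sigma\}=n$. I would then exhibit the complementation map $\sigma\mapsto\tau$ on $\cSn$ defined by $\tau(i)=n+1-\sigma(i)$. Since $\sigma(i)>\sigma(i+1)$ if and only if $\tau(i)<\tau(i+1)$, this map turns every descent of $\sigma$ into an ascent of $\tau$ and vice versa, so a $\sigma$ with $l$ descents is sent to a $\tau$ with $n-l$ descents. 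As $\sigma\mapsto\tau$ is an involution, it is a bijection of $\cSn$, and it restricts to a bijection between the circular permutations counted by $\overline{A}(n,l)$ and those counted by $\overline{A}(n,n-l)$, giving $\overline{A}(n,l)=\overline{A}(n,n-l)$.

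I expect part (i) to be entirely routine and part (ii) to carry the real content, though even there the argument is short. The main thing to get right is the bookkeeping in (ii): confirming that complementation descends to a well-defined involution on the rotation classes constituting $\cSn$ (if $\sigma'$ is a rotation of $\sigma$ then $\tau'$ is the same rotation of $\tau$), and that it swaps ascent and descent at every one of the $n$ cyclic positions, so that the descent count transforms exactly as $l\mapsto n-l$ and not, say, $l\mapsto n-l\pm 1$. An alternative, purely algebraic route is available — from $\overline{A}(n,l)=A(n-1,l)$ one could instead invoke the classical symmetry of the Eulerian numbers $A(n-1,l)=A(n-1,n-l)$ — but the complementation bijection establishes that symmetry directly and is cleaner, so I would present it that way.
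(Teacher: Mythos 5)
Your proof is correct and follows essentially the same route as the paper: part (i) exactly as in the paper, by combining $\overline{A}(n,l)=A(n-1,l)$ with Proposition \ref{Bona-Thm}, and part (ii) by exhibiting an involution on $\cSn$ that swaps ascents and descents and invoking $\sharp\{\text{ascents}\}+\sharp\{\text{descents}\}=n$. The only difference is the choice of involution --- the paper uses the reversal $R(\sigma)=(\sigma(n),\sigma(n-1),\cdots,\sigma(1))$ where you use the complement $\tau(i)=n+1-\sigma(i)$; both maps are well defined on circular permutations and exchange ascents with descents at every cyclic position, so the two arguments are interchangeable.
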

	\begin{proof}
		We only need to prove (ii). Note for any $\sigma\in\cSn$,
		$$
		\sharp\{ {\rm ascents\; of}\; \sigma\}+\sharp\{ {\rm descents\; of}\; \sigma\}=n
		$$
		and
		$$
		\sharp\{ {\rm ascents\; of}\; \sigma\}=\sharp\{ {\rm descents\; of}\; R(\sigma)\},
		$$
		where $R(\sigma)=(\sigma(n),\sigma(n-1),\cdots,\sigma(1))$ is the reversal of $\sigma\in\cSn$. Hence
		$$
		\sharp\{ {\rm descents\; of}\; R(\sigma)\}=n-\sharp\{ {\rm descents\; of}\; \sigma\}.
		$$
		Therefore, the claim follows.
	\end{proof}

	For any $\sigma\in\cSn $ with $\sharp\{ {\rm descents\; of}\; \sigma\}=l$, we have
	$$
	m(\sigma):=\sharp\{ {\rm descents\; of}\; \sigma\}-\sharp\{ {\rm ascents\; of}\; \sigma\}=2l-n ,
	$$
	which is equivalent to $ l=(n+m)/2$.
	Therefore, we have
	$$ M_m = \overline{A}(n,(n-m)/2). $$
	\vskip 3mm
	
	\subsection{A combinatorial derivative formula}\label{derivative}
	Given $(x_1,\cdots, x_n)\in [0,1]^n$, we may assume $x_i\neq x_j$, for any $i\neq j$. Then there exists a unique $\sigma\in \mathfrak{S}_n$ such that
	\[ x_{\sigma(1)} <x_{\sigma(2)}<\cdots < x_{\sigma(n)}.\]
	Therefore, by \eqref{equ:Green}, we have
	\[ \prod_{i=1}^nG(x_i,x_{i+1})=\frac{1}{2^n\sin^{n}(\frac{\alpha}{2})}e^{-\frac{1}{2}\i\alpha\cdot m(\sigma)}.\]
	It follows that
	\begin{align*}
		&\int_0^1\cdots\int_0^1G(x_1,x_2)\cdots G(x_n,x_1){\rm d}x_1\cdots{\rm d}x_n\\
		&=\frac{1}{2^n\sin^{n}(\frac{\alpha}{2})}\sum_{\sigma \in \mathfrak{S}_n} \int_0^1\int_{0}^{x_{\sigma(n)}}\cdots\int_{0}^{x_{\sigma(2)}}{e^{-\frac{1}{2}\i\alpha\cdot m(\sigma)}}dx_{\sigma(n)}dx_{\sigma(n-1)}\cdots dx_{\sigma(1)}\\
		&=\frac{n}{2^n\sin^{n}(\frac{\alpha}{2})}\sum_{\sigma \in \cSn} \int_0^1\int_{0}^{x_{\sigma(n)}}\cdots\int_{0}^{x_{\sigma(2)}}{e^{-\frac{1}{2}\i\alpha\cdot m(\sigma)}}dx_{\sigma(n)}dx_{\sigma(n-1)}\cdots dx_{\sigma(1)}\\
		&=\frac{n}{2^n\sin^{n}(\frac{\alpha}{2})}\sum_{m=2-n}^{n-2} M_m e^{-\frac{1}{2}\i\alpha m}\int_{0}^1 \int_{0}^{x_n}\cdots \int_{0}^{x_2}dx_ndx_{n-1}\cdots dx_1\\
		&=\frac{1}{2^n\sin^n(\frac{\alpha}{2})}\frac{1}{(n-1)!}
		\sum_{l=1}^{n-1}e^{-\i(n/2-l)\alpha}\overline{A}(n,l).
	\end{align*}
	Using Lemma \ref{A-Property} (ii), we may further simplify the terms in the above sum:
	\begin{equation}\label{Temp-torus}
		e^{-\i(n/2-l)\alpha}\overline{A}(n,l)+e^{-\i(n/2-(n-l))\alpha}\overline{A}(n,n-l)
		=2\cos((n/2-l)\alpha)\overline{A}(n,l).
	\end{equation}
	In summary, our main result for this section is the following theorem.
	\begin{theorem}\label{Mercer-torus}
		For any $\alpha\neq 2k\pi$, $k=0,\pm 1,\pm2, \cdots$,
		and $n\geq 2$, we have
		\begin{align}\label{Mercer-torus-temp}
			\sum_{k=-\infty}^{+\infty}\frac{1}{(2k\pi+\alpha)^n}
			=\frac{1}{2^n (n-1)!\sin^n(\alpha/2)} \sum_{l=1}^{n-1}\cos\left((n/2-l)\alpha\right)\overline{A}(n,l),
		\end{align}
		and for $n=1$, we have the well known formula
		$$
		\lim_{N\rightarrow +\infty}\sum_{k=-N}^{N}\frac{1}{2k\pi+\alpha}
		=\frac{1}{2}\cot\left(\frac{\alpha}{2}\right).
		$$
	\end{theorem}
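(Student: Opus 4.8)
The plan is to read off both formulas from the machinery already assembled above: Mercer's identity \eqref{equ:mercer}, the product formula \eqref{equ:Green} for the composed Green kernel, and the dictionary between the descent-minus-ascent statistic and the circular Eulerian numbers.

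For $n\ge 2$ I would begin from the right-hand side of \eqref{equ:mercer} and substitute \eqref{equ:Green}. Since the locus where two coordinates coincide has Lebesgue measure zero, it may be discarded, and on the complement each point $(x_1,\dots,x_n)$ is ordered by a unique $\sigma\in\mathfrak{S}_n$ with $x_{\sigma(1)}<\cdots<x_{\sigma(n)}$; over the corresponding open simplex the integrand \eqref{equ:Green} is the constant $\frac{1}{2^n\sin^n(\alpha/2)}e^{-\frac{1}{2}\i\alpha\,m(\sigma)}$. As each such ordering-simplex has volume $1/n!$, integrating gives $\frac{1}{2^n\sin^n(\alpha/2)}\sum_{\sigma\in\mathfrak{S}_n}\frac{1}{n!}e^{-\frac{1}{2}\i\alpha\,m(\sigma)}$, reducing the analytic problem to the purely combinatorial sum $\sum_{\sigma\in\mathfrak{S}_n}e^{-\frac{1}{2}\i\alpha\,m(\sigma)}$.

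Next I would carry out the combinatorics. Because $m(\sigma)$ depends only on the cyclic class of $\sigma$, each class in $\cSn$ has exactly $n$ linear representatives, so $\sum_{\mathfrak{S}_n}e^{-\frac{1}{2}\i\alpha\,m(\sigma)}=n\sum_{\cSn}e^{-\frac{1}{2}\i\alpha\,m(\sigma)}$; grouping the latter by the common value $m$ introduces the class numbers $M_m$. With the dictionary $m=2l-n$ and $M_m=\overline{A}(n,l)$ established above (so that the range $2-n\le m\le n-2$ corresponds to $1\le l\le n-1$), the prefactor collapses to $\frac{n}{2^n\,n!\,\sin^n(\alpha/2)}=\frac{1}{2^n(n-1)!\sin^n(\alpha/2)}$, and I am left with $\sum_{l=1}^{n-1}\overline{A}(n,l)\,e^{\pm\i(n/2-l)\alpha}$, where the sign of the exponent is inessential.

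Finally I would symmetrize. Replacing $l$ by $n-l$ and invoking the palindromic relation $\overline{A}(n,l)=\overline{A}(n,n-l)$ of Lemma \ref{A-Property}(ii) shows this sum equals its own complex conjugate, hence is real and coincides with $\sum_{l=1}^{n-1}\cos((n/2-l)\alpha)\overline{A}(n,l)$; this is exactly the pairing \eqref{Temp-torus}, and it establishes \eqref{Mercer-torus-temp}. The case $n=1$ must be treated apart, since there the kernel $G$ is discontinuous along the diagonal and the series converges only symmetrically: in that case I would simply quote the symmetrized diagonal computation \eqref{T-Mercer}, giving $\frac{1}{2}\cot(\alpha/2)$. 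The only genuinely delicate points are the justification for discarding the measure-zero diagonal and the reality argument; everything else is direct substitution, and the sign ambiguity in the exponent is harmless precisely because only the even function $\cos$ survives.
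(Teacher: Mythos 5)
Your proposal is correct and follows essentially the same route as the paper's own proof: decomposing $[0,1]^n$ into ordering simplices of volume $1/n!$, grouping linear permutations into circular classes (each with $n$ representatives) to introduce $M_m=\overline{A}(n,(n-m)/2)$, symmetrizing via Lemma \ref{A-Property}(ii) exactly as in \eqref{Temp-torus}, and quoting \eqref{T-Mercer} for $n=1$. There is nothing to add.
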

	
	\begin{remark}
		Theorem \ref{Mercer-torus} is indeed a (combinatorial) higher order derivative formula for $\cot x$, i.e.,
		$$\frac{(-1)^{n-1}}{(n-1)!}\frac{d^{n-1}}{d\alpha^{n-1}}\left(\frac{1}{2}\cot\frac{\alpha}{2}\right)= \sum_{k=-\infty}^{+\infty}\frac{1}{(2k\pi+\alpha)^n}.$$       
		Once one has known the expression in \eqref{Mercer-torus-temp}, it would be possible
		to prove Theorem \ref{Mercer-torus} directly by induction,  and by using the induction formula of $\overline{A}(n,l)(cf. $ \cite[Theorem 1.7]{Bona}) and playing with triangle identities. We, however, prefer to 
		keep this seemingly more complicated way to establish the result for two reasons: first, this is the way how we found this expression exactly; 
		second,  the idea of using Green function, Mercer's theorem and combinatorics might be useful for higher-order operators or higher-dimension situations.
	\end{remark}

	\noindent In particular, let $\alpha=\pi/2$ in Theorem \ref{Mercer-torus},
	and we get the following identities: for $n\geq 2$, 
	\begin{equation}\label{equ:4n+1}
		S(n):=\sum_{k=-\infty}^{+\infty}\frac{1}{(4k+1)^n}
		=\frac{\pi^n}{2^n2^{n/2}}\frac{1}{(n-1)!}
		\sum_{l=1}^{n-1}\cos\left((n/2-l)\frac{\pi}{2}\right)\overline{A}(n,l);
	\end{equation}
	and for $n=1$,
	$$
	\sum_{k=0}^\infty\frac{(-1)^k}{2k+1}
	=\lim_{N\rightarrow +\infty}\sum_{k=-N}^{N}\frac{1}{4k+1}=\frac{\pi}{2}\cdot \frac{1}{2}\cot\left(\frac{\pi}{4}\right)
    =\frac{\pi}{4}
	$$
   
	by \eqref{T-Mercer}. 	Note that 
	\[ S(n)=\begin{cases}
		(1-2^{-n})\zeta(n) \;\; &\text{if $n$ is even},\\
		L(n,\chi_4) \;\; &\text{if  $n$  is odd}.
	\end{cases}
	\]
	Comparing Euler's explicit formulas for $\zeta(2n)$ and $L(2n+1,\chi_4)$  and Theorem \ref{Mercer-torus},  we have the following identities.
	\begin{corollary}
		The $2n$-th Bernoulli number
		\begin{equation}
			B_{2n}=\frac{(-1)^{n+1}n}{2^{3n-2} (2^{2n}-1)}\sum_{l=1}^{2n-1} \cos(\frac{n-l}{2}\pi)\overline{A}(2n, l),
		\end{equation}
		and the $2n$-th secant number
		\begin{equation}
			E_{2n}=(-1)^n\frac{\sqrt{2}}{2^{n}}\sum_{l=1}^{2n} \cos\left(\frac{n-l+\frac{1}{2}}{2}\pi\right)\overline{A}(2n+1,l).
		\end{equation}
	\end{corollary}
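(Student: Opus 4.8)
The plan is to match two descriptions of the quantity $S(n)=\sum_{k=-\infty}^{+\infty}(4k+1)^{-n}$ obtained by specializing $\alpha=\pi/2$: one coming from Euler's closed forms \eqref{equ:Eulerzeta} and \eqref{equ:EulerDirichlet} together with the identity $S(n)=(1-2^{-n})\zeta(n)$ for $n$ even and $S(n)=L(n,\chi_4)$ for $n$ odd, the other being the combinatorial evaluation \eqref{equ:4n+1}. Each claimed formula is then the result of solving a single linear equation for $B_{2n}$ or $E_{2n}$; no analytic input beyond Theorem \ref{Mercer-torus} and Euler's identities is required.

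For the Bernoulli number I would set $n\mapsto 2n$ in \eqref{equ:4n+1}. Since $2n$ is even, $S(2n)=(1-2^{-2n})\zeta(2n)$, and inserting \eqref{equ:Eulerzeta} together with $(2\pi)^{2n}=2^{2n}\pi^{2n}$ gives $S(2n)=\frac{(-1)^{n+1}(2^{2n}-1)\pi^{2n}}{2\,(2n)!}B_{2n}$. The combinatorial side \eqref{equ:4n+1}, after substituting $n\mapsto 2n$ (so that the denominator $2^n 2^{n/2}$ becomes $2^{2n}2^{n}=2^{3n}$ and the cosine argument $(n/2-l)\tfrac{\pi}{2}$ becomes $\tfrac{n-l}{2}\pi$), reads $S(2n)=\frac{\pi^{2n}}{2^{3n}(2n-1)!}\sum_{l=1}^{2n-1}\cos\!\big(\tfrac{n-l}{2}\pi\big)\overline{A}(2n,l)$. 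Equating the two, cancelling $\pi^{2n}$, and rewriting $\tfrac{1}{(2n-1)!}=\tfrac{2n}{(2n)!}$ removes $(2n)!$ from both sides and leaves $\frac{2n}{2^{3n}}\sum=\frac{(-1)^{n+1}(2^{2n}-1)}{2}B_{2n}$; solving for $B_{2n}$ and using $\tfrac{4n}{2^{3n}}=\tfrac{n}{2^{3n-2}}$ yields the stated formula.

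The secant number is treated identically with $n\mapsto 2n+1$. Now $2n+1$ is odd, so $S(2n+1)=L(2n+1,\chi_4)=(-1)^n\frac{\pi^{2n+1}E_{2n}}{2^{2n+2}(2n)!}$ by \eqref{equ:EulerDirichlet}. The combinatorial side becomes $S(2n+1)=\frac{\pi^{2n+1}}{2^{3n+3/2}(2n)!}\sum_{l=1}^{2n}\cos\!\big(\tfrac{n-l+1/2}{2}\pi\big)\overline{A}(2n+1,l)$, where I used $2^{2n+1}2^{(2n+1)/2}=2^{3n+3/2}$ and $\big((2n+1)/2-l\big)\tfrac{\pi}{2}=\tfrac{n-l+1/2}{2}\pi$. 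Cancelling $\pi^{2n+1}$ and $(2n)!$ gives $\frac{1}{2^{3n+3/2}}\sum=(-1)^n\frac{E_{2n}}{2^{2n+2}}$, hence $E_{2n}=(-1)^n 2^{(2n+2)-(3n+3/2)}\sum=(-1)^n\frac{\sqrt2}{2^{n}}\sum$, the claimed expression.

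Since the content is pure bookkeeping, the only real danger is the arithmetic of the powers of two—especially the half-integer exponent $2^{(2n+1)/2}=2^{n}\sqrt2$ in the odd case and the reduction $2^{n/2}\mapsto 2^{n}$ in the even case. I would double-check these exponents and the two cosine arguments against \eqref{equ:4n+1} before finalizing, as a single stray factor of $2$ or $\sqrt2$ is the most plausible source of an error.
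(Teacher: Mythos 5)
Your proposal is correct and follows essentially the same route as the paper: the paper likewise specializes $\alpha=\pi/2$ in Theorem \ref{Mercer-torus} to obtain \eqref{equ:4n+1}, identifies $S(n)$ with $(1-2^{-n})\zeta(n)$ for even $n$ and $L(n,\chi_4)$ for odd $n$, and then reads off $B_{2n}$ and $E_{2n}$ by comparison with Euler's formulas \eqref{equ:Eulerzeta} and \eqref{equ:EulerDirichlet}. Your bookkeeping of the powers of two ($2^{3n}$ in the even case, $2^{3n+3/2}$ in the odd case) and of the cosine arguments is accurate, so the derivation is complete.
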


	\section{Special values of Dirichlet $L$-functions}\label{L-fun}
	
	In this section, we study the relationship between the spectral series
	\eqref{Series-1} and  Dirichlet $L$-functions via Hurwitz zeta functions. Recall for $s\in\mathbb{C}$
	with $\Re(s)>1$ the Hurwitz zeta function (cf. \cite[\S 12]{Apostol}) is
	$$
	\zeta(s,a):=\sum_{k=0}^{+\infty}\frac{1}{(k+a)^s},
	$$
	for $a\not= 0,-1,-2,\cdots$.
	For a Dirichlet character \cite[\S 4]{Dav},
	$$
	\chi: (\mathbb{Z}/N\mathbb{Z})^{\times}\rightarrow \mathbb{C}^\times,
	$$
	and for $s\in\mathbb{C}$ with $\Re(s)>1$, the Dirichlet $L$-function is
	\begin{equation}\label{L-Function}
		L(s,\chi):=\sum_{k=1}^{\infty}\frac{\chi(k)}{k^s}.
	\end{equation}
	By rearranging the terms according to the residue classes modulo $N$, we see that $L(s,\chi)$  is a linear combination of Hurwitz zeta functions:
	\begin{equation*}
		L(s,\chi)=\sum_{m=1}^{N}\sum_{k=0}^{\infty}\frac{\chi(kN+m)}{(kN+m)^s}=\frac{1}{N^s}\sum_{m=1}^{N}\chi(m)\sum_{k=0}^{\infty}\frac{1}{(k+\frac{m}{N})^s}=\frac{1}{N^s}\sum_{m=1}^{N}\chi(m)\zeta(s,\frac{m}{N}).
	\end{equation*}
	For our purpose, the above relation can also be written as
	\begin{align}\label{Dir-Hur}
		L(s,\chi)&=\frac{1}{2N^s}\left[\sum_{m=1}^{N}\chi(m)\zeta(s,\frac{m}{N})+\sum_{m=1}^{N}\chi(N-m)\zeta(s,\frac{N-m}{N})\right] \nonumber\\
		&=\frac{1}{2N^s}\left[\sum_{m=1}^{N}\chi(m)\zeta(s,\frac{m}{N})+\chi(-m)\zeta(s,1-\frac{m}{N})\right]  \nonumber\\
		&=\frac{1}{2N^s}\sum_{m=1}^{N}\chi(m)\left[\zeta(s,\frac{m}{N})+\chi(-1)\zeta(s,1-\frac{m}{N})\right].
	\end{align}

	\noindent Similarly, the spectral zeta function is also a linear combination of Hurwitz zeta functions:
	\begin{align}\label{Spe-Hur}
		\sum_{k=-\infty}^{+\infty}\frac{1}{(2k\pi+\alpha)^n}
		&=\frac{1}{(2\pi)^n}\sum_{k=-\infty}^{+\infty} \frac{1}{(k+\frac{\alpha}{2\pi})^n}\nonumber\\
		&=\frac{1}{(2\pi)^n}\left[\sum_{k=0}^{\infty}\frac{1}{(k+\frac{\alpha}{2\pi})^n}+\sum_{k=1}^{\infty}\frac{1}{(-k+\frac{\alpha}{2\pi})^n}\right] \nonumber\\
		&=\frac{1}{(2\pi)^n}\left[\sum_{k=0}^{+\infty}\frac{1}{(k+\frac{\alpha}{2\pi})^n}+(-1)^n\sum_{k=0}^{+\infty}\frac{1}{(k+1-\frac{\alpha}{2\pi})^n}
		\right]\nonumber\\
		&=\frac{1}{(2\pi)^n}\left[\zeta(n,\frac{\alpha}{2\pi})+(-1)^n\zeta(n,1-\frac{\alpha}{2\pi})\right].
	\end{align}
	
	\noindent For any Dirichlet character $	\chi: (\mathbb{Z}/N\mathbb{Z})^{\times}\rightarrow \mathbb{C}^\times$ and any integer $n\geq 1$ satisfying $\chi(-1)=(-1)^n$. 
	Combining \eqref{Dir-Hur} and \eqref{Spe-Hur}, we have the following identity.		
	
	\[
	L(n,\chi)=\frac{2^{n-1}\pi^n}{N^n}\sum_{m=1}^N\chi(m) \sum_{k=-\infty}^{+\infty}\frac{1}{(2k\pi+\frac{2m\pi}{N})^n}.
	\]
	
	\noindent Thus, as a consequence of our derivative formula Theorem \ref{Mercer-torus}, we get
	\begin{theorem}\label{thm:zeta-values}
		For any Dirichlet character $\chi: (\mathbb{Z}/N\mathbb{Z})^{\times}\rightarrow \mathbb{C}^\times$, $N>2$		
		and any integer $n\geq 2$ satisfying $\chi(-1)=(-1)^n$, we have
		\begin{equation}
			L(n,\chi)=\frac{\pi^n}{2(n-1)!N^n}\sum_{m=1}^{N}\frac{\chi(m)}{\sin^n(m\pi/N)}\sum_{l=1}^{n-1}\cos\big(\frac{(n-2l)m\pi}{N}\big)\bar{A}(n,l).
		\end{equation}
		For $n=1$ and an odd character $\chi$, we have
		\begin{equation}\label{chi1}
			L(1,\chi)=\frac{\pi}{2N}\sum_{m=1}^{N}\chi(m) \cot(\frac{m\pi}{N}).
		\end{equation}
		
	\end{theorem}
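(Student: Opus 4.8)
The plan is to prove the theorem by feeding the combinatorial derivative formula of Theorem \ref{Mercer-torus} into the bridge identity
\[
L(n,\chi)=\frac{2^{n-1}\pi^n}{N^n}\sum_{m=1}^N\chi(m)\sum_{k=-\infty}^{+\infty}\frac{1}{\left(2k\pi+\frac{2m\pi}{N}\right)^n}
\]
recorded just above, which follows from matching the two Hurwitz-zeta expansions \eqref{Dir-Hur} and \eqref{Spe-Hur}. It is exactly the parity hypothesis $\chi(-1)=(-1)^n$ that makes this match possible: the sign $(-1)^n$ produced in \eqref{Spe-Hur} is replaced by $\chi(-1)$, so the linear combination of $\zeta(n,\tfrac{m}{N})$ and $\zeta(n,1-\tfrac{m}{N})$ coming from the spectral series agrees term by term with the one defining $L(n,\chi)$. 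With this identity in hand, all of the arithmetic content lives in Theorem \ref{Mercer-torus}, and the proof reduces to a specialization plus constant-chasing.

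For $n\geq 2$ I would set $\alpha=\tfrac{2m\pi}{N}$ in \eqref{Mercer-torus-temp}. Then $\sin(\alpha/2)=\sin(m\pi/N)$ and $(n/2-l)\alpha=\tfrac{(n-2l)m\pi}{N}$, so Theorem \ref{Mercer-torus} gives
\[
\sum_{k=-\infty}^{+\infty}\frac{1}{\left(2k\pi+\frac{2m\pi}{N}\right)^n}
=\frac{1}{2^n(n-1)!\sin^n(m\pi/N)}\sum_{l=1}^{n-1}\cos\!\left(\frac{(n-2l)m\pi}{N}\right)\overline{A}(n,l).
\]
Substituting this into the bridge identity and collapsing the constant $\tfrac{2^{n-1}}{2^n}=\tfrac12$ produces the asserted formula verbatim. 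One small point to verify is that Theorem \ref{Mercer-torus} is applicable for every contributing $m$: the term $m=N$ has $\chi(N)=0$ and is discarded, while for $1\leq m\leq N-1$ the hypothesis $N>2$ guarantees $\tfrac{2m\pi}{N}\notin 2\pi\mathbb{Z}$, as required.

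For $n=1$ with $\chi$ odd (so that $\chi(-1)=-1=(-1)^1$ meets the parity condition), I would instead invoke the $n=1$ branch of Theorem \ref{Mercer-torus}, i.e. $\sum_{k}(2k\pi+\alpha)^{-1}=\tfrac12\cot(\alpha/2)$ in the symmetric-limit sense. Taking $\alpha=\tfrac{2m\pi}{N}$ gives $\tfrac12\cot(m\pi/N)$, and because the outer sum over $m$ is finite the symmetric limit commutes with it; the bridge identity then collapses to \eqref{chi1}. I expect no genuine obstacle here: the argument is a direct substitution, and the only work is clerical bookkeeping of the powers of $2$ and $\pi$ together with the two trigonometric simplifications above.
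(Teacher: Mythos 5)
Your proposal is correct and takes essentially the same route as the paper: the paper also combines the Hurwitz-zeta decompositions \eqref{Dir-Hur} and \eqref{Spe-Hur} under the parity hypothesis $\chi(-1)=(-1)^n$ to get the bridge identity, then specializes Theorem \ref{Mercer-torus} at $\alpha=\frac{2m\pi}{N}$ (with the $n=1$ case coming from the cotangent formula \eqref{T-Mercer}). Your constant bookkeeping $\frac{2^{n-1}}{2^n}=\frac{1}{2}$ and the remark that the $m=N$ term drops out since $\chi(N)=0$ agree with the paper's computation.
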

	
	\begin{remark}
		The expression for $L(1, \chi)$ \eqref{chi1} is well known, see, for example, {\cite[Proposition 1]{Lou}}. \\
		It is worth mentioning  that  the Dirichlet character in Theorem \ref{thm:zeta-values} is arbitrary instead of 
		primitive as was usually required  in the traditional formulae for $L(n, \chi)$, which makes it easier to use, especially, in problems related to family of special values $L(n, \chi)$.  See, for example, 
		Corollary \ref{cor:mean} in next section.
	\end{remark}
	\vskip 3mm

	\section{Special values of spectral zeta functions for cycle graphs $\Z/N\Z$}\label{spectal-zeta}
	
	In this section, we give Theorem \ref{thm:zeta-values} another interpretation which relates special values of Dirichlet $L$-functions to 
	those of spectral zeta functions of the cycle graphs  $\Z/N\Z$.  Recall the spectral zeta function associated to the Cayley graph $\Z/N\Z$ is
	\begin{equation}
		\zeta_{\Z/N\Z}(s)=\sum_{m=1}^{N-1} \frac{1}{\sin^{2s}(\frac{m\pi}{N})}.
	\end{equation}
	
	The special values of these spectral zeta functions are well studied in mathematical physics and algebraic geometry,  which are called the Verlinde numbers (cf. \cite{Zagier})
	\[ V_g(N)\coloneqq\sum_{m=1}^{N-1} \frac{1}{\sin^{2g}(\frac{m\pi}{N})} \in \mathbb{Q}.\]
	In fact, $\Big(\frac{N}{2}\Big)^g V_g(N) $ is a polynomial in $N$ of degree $3g$.  More precisely, we have the following explicit computation.
	\begin{lemma}(\cite[Theorem 1(iii)]{Zagier}) \label{Lemma_Zagier} We have
		\begin{align}
			\zeta_{\Z/N\Z}(g)=V_g(N)=\sum_{s=0}^{g}\frac{(-1)^{s-1}2^{2s}B_{2s}}{(2s)!}c_{g,s}N^{2s},
		\end{align}
		where $c_{g,s}$ is the coefficients of $x^{-2s}$ in the Laurent expansion of $(\sin x)^{-2g}$ at $x=0$, i.e. 
		$$\frac{1}{\sin^{2g} x}=\frac{c_{g, g}}{x^{2g}}+\cdots + \frac{c_{g, 1}}{x^{2}}+c_{g, 0}+ \cdots .$$
		Note that $c_{g,g}=1$ for any integer $g\in \mathbb{N}$.
	\end{lemma}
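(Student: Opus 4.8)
The plan is to prove the identity via the Mittag--Leffler (partial fraction) expansion of $1/\sin^{2g}$, which converts the finite sum over the points $m\pi/N$ into a combination of values of $\zeta(2s)$, and then to invoke Euler's formula \eqref{equ:Eulerzeta}. For each integer $s\geq 1$ set $\phi_s(x):=\sum_{k\in\Z}(x-k\pi)^{-2s}$; this series converges absolutely and locally uniformly away from $\pi\Z$, and $\phi_s$ is $\pi$-periodic with a pole of order $2s$ and principal part $(x-k\pi)^{-2s}$ at each $k\pi$.

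First I would establish the expansion
\[
\frac{1}{\sin^{2g}x}=\sum_{s=1}^{g}c_{g,s}\,\phi_s(x).
\]
Both sides are $\pi$-periodic meromorphic functions with poles exactly on $\pi\Z$, and by the very definition of the $c_{g,s}$ the two sides have the same principal part $\sum_{s=1}^{g}c_{g,s}(x-k\pi)^{-2s}$ at each pole $k\pi$. Hence their difference is entire and $\pi$-periodic, and it tends to $0$ as $|\mathrm{Im}(x)|\to\infty$ (since $1/\sin^{2g}x$ decays exponentially in the imaginary direction and each $\phi_s$ is $O(|\mathrm{Im}(x)|^{1-2s})$ there); being periodic it is therefore bounded on $\C$, so by Liouville it is identically $0$. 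Comparing the constant Laurent coefficients at $x=0$ then yields the auxiliary identity
\[
c_{g,0}=\sum_{s=1}^{g}c_{g,s}\,\phi_s^{\mathrm{reg}}(0)=\sum_{s=1}^{g}c_{g,s}\,\frac{2\zeta(2s)}{\pi^{2s}},
\]
where $\phi_s^{\mathrm{reg}}(0)=\sum_{k\neq0}(k\pi)^{-2s}=2\zeta(2s)/\pi^{2s}$ is the regular part of $\phi_s$ at the origin; this is what will later fold the lowest-order term into the stated sum.

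Next I would substitute $x=m\pi/N$ and sum over $m=1,\dots,N-1$, interchanging the absolutely convergent sums to get $\zeta_{\Z/N\Z}(g)=\sum_{s=1}^{g}c_{g,s}\sum_{m=1}^{N-1}\phi_s(m\pi/N)$. The key elementary step is the reindexing $n=m-kN$: as $(m,k)$ runs over $\{1,\dots,N-1\}\times\Z$, this $n$ runs bijectively over the integers not divisible by $N$ (explicitly $m=n\bmod N$ and $k=-\lfloor n/N\rfloor$). Therefore
\[
\sum_{m=1}^{N-1}\phi_s\Big(\frac{m\pi}{N}\Big)=\frac{N^{2s}}{\pi^{2s}}\sum_{N\nmid n}\frac{1}{n^{2s}}=\frac{N^{2s}}{\pi^{2s}}\,2\zeta(2s)\big(1-N^{-2s}\big)=\frac{2\zeta(2s)}{\pi^{2s}}\big(N^{2s}-1\big),
\]
and Euler's formula \eqref{equ:Eulerzeta} gives $2\zeta(2s)/\pi^{2s}=(-1)^{s-1}2^{2s}B_{2s}/(2s)!=:\beta_s$.

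Finally I would assemble the pieces: $\zeta_{\Z/N\Z}(g)=\sum_{s=1}^{g}c_{g,s}\beta_s(N^{2s}-1)=\sum_{s=1}^{g}c_{g,s}\beta_s N^{2s}-c_{g,0}$, using the auxiliary identity $\sum_{s=1}^{g}c_{g,s}\beta_s=c_{g,0}$ for the subtracted constant. Since $\beta_0=(-1)^{-1}2^0B_0/0!=-1$, the term $-c_{g,0}$ is precisely the $s=0$ summand $\beta_0 c_{g,0}N^0$, so the whole expression equals $\sum_{s=0}^{g}\beta_s c_{g,s}N^{2s}$, which is the claim. I expect the main obstacle to be the rigorous justification of the partial fraction expansion in the first step—the Liouville/decay argument together with the interchange of summation—rather than the bookkeeping; once that expansion is in hand, the arithmetic reindexing and Euler's formula finish the computation routinely.
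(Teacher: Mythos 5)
Your proof is correct, and there is in fact nothing in the paper to compare it against: the paper does not prove Lemma~\ref{Lemma_Zagier} at all, it quotes it from \cite[Theorem 1(iii)]{Zagier}. So your proposal supplies a self-contained argument where the paper has only a citation. The key steps all check out: the principal parts of $1/\sin^{2g}x$ and of $\sum_{s=1}^{g}c_{g,s}\phi_s(x)$ agree at every pole $k\pi$ (since $\sin^2$ is $\pi$-periodic and $1/\sin^{2g}$ is even, the expansion at $k\pi$ is the expansion at $0$ shifted, with only even negative powers); the difference is then entire, $\pi$-periodic, and tends to $0$ uniformly in the period strip as $|\mathrm{Im}(x)|\to\infty$ (exponential decay of $1/\sin^{2g}$ versus the $O(|\mathrm{Im}(x)|^{1-2s})$ bound for $\phi_s$), so Liouville kills it; the reindexing $n=m-kN$ is a genuine bijection from $\{1,\dots,N-1\}\times\Z$ onto $\{n\in\Z:\ N\nmid n\}$ and all sums involved converge absolutely for $s\geq 1$; and the constant-term bookkeeping closes because $\beta_0=-B_0=-1$, so the subtracted $c_{g,0}$ is exactly the $s=0$ summand of the stated formula. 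One remark on how your route sits inside the paper's own logic: your intermediate identity $\zeta_{\Z/N\Z}(g)=\sum_{s=1}^{g}c_{g,s}\beta_s\left(N^{2s}-1\right)$ is precisely \eqref{eq:spectral-zeta-to-zeta} of Proposition~\ref{spectral-to-zeta-value}, which the paper obtains only by combining its Mercer-theorem machinery with Lemma~\ref{Lemma_Zagier} itself (via Proposition~\ref{pro:A-and-C}), and your auxiliary identity $c_{g,0}=\sum_{s=1}^{g}c_{g,s}\beta_s$ is exactly the ``except the constant term'' discrepancy flagged in the remark following that proposition. Your argument therefore does more than justify the citation: it gives an independent, purely classical derivation (Mittag--Leffler expansion plus Euler's formula \eqref{equ:Eulerzeta}) that would make Section~\ref{spectal-zeta} self-contained.
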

	\vskip 3mm
	
	\noindent For any Dirichlet character $\chi: (\mathbb{Z}/N\mathbb{Z})^{\times}\rightarrow \mathbb{C}^\times$, we let
	\begin{align}\label{graph-Dirichlet-L}
		L_{\Z/N\Z}(s,\chi)=\sum_{m=1}^{N-1} \frac{\chi(m)}{\sin^{2s}(\frac{m\pi}{N})}.
	\end{align}
	Note that \eqref{graph-Dirichlet-L}  vanishes identically for an odd character.  To treat the Dirichlet $L$-values at odd integers, we also introduce the following notation
	\begin{align*}
		\widetilde{L}_{\Z/N\Z}(s,\chi)=\sum_{m=1}^{N-1} \frac{\chi(m)\cot (\frac{m\pi}{N})}{\sin^{2s}(\frac{m\pi}{N})}.	
	\end{align*}
	\vskip 3mm
	
	\noindent Recall that \[ \cos (2k\beta)=(-1)^kT_{2k}(\sin \beta); \; \cos((2k+1)\beta)=(-1)^k\cos\beta U_{2k}(\sin\beta), \]
	for any $k\in \mathbb{N}$, where 
	\[T_{2k}(x)=\sum_{r=0}^k{2k \choose 2r}(x^2-1)^rx^{2k-2r}=\sum_{i=0}^{k}(-1)^{k-i}\frac{k}{k+i}{k+i\choose k-i}{(2x)}^{2i}\] 
	is the $2k$-th Chebyshev polynomial of the first kind and
	\[ U_{2k}(x)=\sum_{r=0}^k{2k+1 \choose 2r+1}(x^2-1)^rx^{2k-2r}=\sum_{i=0}^{k}(-1)^{k-i}{k+i\choose k-i}{(2x)}^{2i}\]
	is the $2k$-th Chebyshev polynomial of the second kind.
	For convenience, we write 
	\[ T_{2k}(x)=\sum_{i=0}^k t(k,i)x^{2i}\;\; \text{and} \;\; U_{2k}(x)=\sum_{i=0}^k u(k,i)x^{2i}, \]
	where 
	\begin{align*}
		t(k,i)=(-1)^{k-i}2^{2i}\frac{k}{k+i}{k+i \choose k-i}, \;\;\;\; \; u(k,i)=(-1)^{k-i}2^{2i}{k+i \choose k-i}.
	\end{align*}	
	\noindent Note that $t(k,0)=(-1)^k=u(k,0)$, $t(k,k)=2^{2k-1}$ and $u(k,k)=2^{2k}$ for any $k$.  Here we emphasize that 
	 $$t(0,0)= \frac{1}{2}.$$

	\begin{theorem}\label{thm:L-to-LN}
		\begin{itemize}
			\item[(i)]For any even character $\chi\Mod N$ and any integer $n\geq 1$, we have
			\begin{align}\label{equ:L-even-values}
				L(2n,\chi)=\frac{\pi^{2n}}{ 2(2n-1)!N^{2n}} \sum_{i=1}^{n}a_{n,i} L_{\Z/N\Z}(i,\chi),
			\end{align}
			where  \begin{align}\label{eq:coefficient_a_ni}
				a_{n,i}=2\sum_{l=1}^{i}(-1)^{n-l}t(n-l,n-i)\bar{A}(2n,l), \;\text{for $i\in \{1,2,\cdots,n\}$}.
			\end{align}
			\item[(ii)] For any odd character $\chi \Mod N$ and any integer $n\geq 1$, we have
			\begin{equation}\label{equ:L-odd-values}
				L(2n+1,\chi)=\frac{\pi^{2n+1}}{ (2n)!N^{2n+1}} \sum_{i=1}^{n}b_{n,i}\widetilde{L}_{\Z/N\Z}(i,\chi),
			\end{equation}
			where  \begin{align}
				b_{n,i}=\sum_{l=1}^{i}(-1)^{n-l}u(n-l,n-i)\bar{A}(2n+1,l)=i a_{n, i}, \;\;\text{for $i\in \{1,2,\cdots,n\}$}.
			\end{align}
		\end{itemize}
		
	\end{theorem}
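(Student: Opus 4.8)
The plan is to read off both identities from Theorem \ref{thm:zeta-values} by specializing it at the even index $2n$ and the odd index $2n+1$, and then converting the trigonometric inner sum into a Laurent polynomial in $1/\sin(m\pi/N)$ through the Chebyshev expansions recorded just before the theorem. Throughout set $\beta=m\pi/N$. Since $\chi(N)=\chi(0)=0$ for $N>1$, the $m=N$ term of Theorem \ref{thm:zeta-values} carries a zero factor and is discarded, so every sum over $m$ runs effectively over $1\le m\le N-1$, matching the definitions of $L_{\Z/N\Z}$ and $\widetilde L_{\Z/N\Z}$.

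For part (i), apply Theorem \ref{thm:zeta-values} at index $2n$ (valid because $2n\ge 2$ and $\chi(-1)=1=(-1)^{2n}$); the inner sum is $\sum_{l=1}^{2n-1}\cos(2(n-l)\beta)\bar A(2n,l)$. First I would fold it with the symmetry $\bar A(2n,l)=\bar A(2n,2n-l)$ of Lemma \ref{A-Property}(ii): pairing $l$ with $2n-l$ and using that $\cos$ is even collapses the range to $1\le l\le n$ at the cost of a factor $2$, exactly as in \eqref{Temp-torus}; the self-paired midpoint $l=n$ is absorbed uniformly thanks to the convention $t(0,0)=\tfrac{1}{2}$, since $2(-1)^0T_0(\sin\beta)\bar A(2n,n)=\bar A(2n,n)$. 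Invoking $\cos(2k\beta)=(-1)^kT_{2k}(\sin\beta)$ turns the folded sum into $2\sum_{l=1}^n(-1)^{n-l}T_{2(n-l)}(\sin\beta)\bar A(2n,l)$. I would then substitute $T_{2(n-l)}(\sin\beta)=\sum_j t(n-l,j)\sin^{2j}\beta$, divide by $\sin^{2n}\beta$, and reindex by $i=n-j$ so each monomial becomes $1/\sin^{2i}\beta$; interchanging the order of summation via $\sum_{l=1}^n\sum_{i=l}^n=\sum_{i=1}^n\sum_{l=1}^i$ collects the coefficient of $1/\sin^{2i}\beta$ into precisely $a_{n,i}$ as in \eqref{eq:coefficient_a_ni}. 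Summing against $\chi(m)$ recognizes the $m$-sum as $L_{\Z/N\Z}(i,\chi)$ and yields \eqref{equ:L-even-values}. Part (ii) is parallel at index $2n+1$, with two differences: the folding of $\sum_{l=1}^{2n}\cos((2n+1-2l)\beta)\bar A(2n+1,l)$ has no self-paired midpoint (as $2n+1$ is odd), and using $\cos((2k+1)\beta)=(-1)^k\cos\beta\,U_{2k}(\sin\beta)$ it becomes $2\cos\beta\sum_{l=1}^n(-1)^{n-l}U_{2(n-l)}(\sin\beta)\bar A(2n+1,l)$. The surviving factor $\cos\beta$ combines with $1/\sin^{2n+1}\beta$ into $\cot\beta\cdot(1/\sin^{2n}\beta)$, so after expanding $U_{2(n-l)}(\sin\beta)=\sum_j u(n-l,j)\sin^{2j}\beta$ and reindexing as before, the $m$-sum becomes $\widetilde L_{\Z/N\Z}(i,\chi)$ with coefficient $b_{n,i}=\sum_{l=1}^i(-1)^{n-l}u(n-l,n-i)\bar A(2n+1,l)$, giving \eqref{equ:L-odd-values}.

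The only step that is not bookkeeping is the identity $b_{n,i}=i\,a_{n,i}$, whose difficulty is that it ties $\bar A(2n,\cdot)$ to $\bar A(2n+1,\cdot)$ and $t$ to $u$ at once. Rather than attack it combinatorially, I would deduce it analytically from the differentiation relation underlying Theorem \ref{Mercer-torus}. Writing $g_n(\alpha)=\sum_{k}(2k\pi+\alpha)^{-n}$ and $\beta=\alpha/2$, the computations above give
\[
g_{2n}(\alpha)=\frac{1}{2^{2n}(2n-1)!}\sum_{i=1}^{n}a_{n,i}\sin^{-2i}\beta,\qquad
g_{2n+1}(\alpha)=\frac{\cot\beta}{2^{2n}(2n)!}\sum_{i=1}^{n}b_{n,i}\sin^{-2i}\beta.
\]
Differentiating the first identity in $\alpha$, using $\frac{d}{d\alpha}\sin^{-2i}(\alpha/2)=-\,i\cot(\alpha/2)\sin^{-2i}(\alpha/2)$, and comparing with the elementary relation $g_{2n}'(\alpha)=-2n\,g_{2n+1}(\alpha)$ gives $\sum_i i\,a_{n,i}\sin^{-2i}\beta=\sum_i b_{n,i}\sin^{-2i}\beta$; since the functions $\sin^{-2i}\beta$ for $1\le i\le n$ are linearly independent, this forces $b_{n,i}=i\,a_{n,i}$ termwise, closing the proof. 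I expect the only delicate points to be the correct handling of the $l=n$ midpoint (via $t(0,0)=\tfrac{1}{2}$) and the vanishing $m=N$ term.
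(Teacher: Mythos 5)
Your proposal is correct and takes essentially the same route as the paper's own proof: specialize Theorem \ref{thm:zeta-values} at indices $2n$ and $2n+1$, fold the cosine sum using $\bar{A}(n,l)=\bar{A}(n,n-l)$ (with the midpoint handled via the convention $t(0,0)=\tfrac12$), expand in the Chebyshev polynomials $T$ and $U$ and reindex to identify the coefficients $a_{n,i}$ and $b_{n,i}$, then obtain $b_{n,i}=i\,a_{n,i}$ from the differentiation relation \eqref{derivative-series} by comparing coefficients. No gaps.
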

	\begin{proof}
		(i).	It is enough to calculate the following sum for an arbitrary variable $x$ 
		\begin{align*}
			&\sum_{l=1}^{2n-1}\cos\big( 2(n-l)x\big)\bar{A}(2n,l)\\
			=&\bar{A}(2n,n)+2 \sum_{l=1}^{n-1} \cos\big( 2(n-l)x\big)\bar{A}(2n,l)\\
			=&\bar{A}(2n,n)+2 \sum_{l=1}^{n-1} \bar{A}(2n,l)\sum_{i=0}^{n-l}(-1)^{n-l}t(n-l,i)\sin^{2i}x\\
			=&\bar{A}(2n,n)+2\sum_{l=1}^{n-1}\bar{A}(2n, l) +2 \sum_{l=1}^{n-1} \bar{A}(2n,l)\sum_{i=1}^{n-l}(-1)^{n-l}t(n-l,i)\sin^{2i}x\\			
			=&(2n-1)!+2\sum_{i=1}^{n-1}\sin^{2i}x\sum_{l=1}^{n-i}(-1)^{n-l}t(n-l,i)\bar{A}(2n,l),
		\end{align*}
		where the last equality follows from $t(n-l,0)=(-1)^{n-l}$ and 
		$$\sum_{l=1}^{2n-1}\bar{A}(2n,l)=(2n-1)!.$$

		For (ii), it is also sufficient to compute the following sum for an arbitrary variable $x$
		\begin{align*}
			&\sum_{l=1}^{2n}\cos\big( (2n-2l+1)x\big)\bar{A}(2n+1,l)\\
			=&2 \sum_{l=1}^{n} \cos\big( (2n-2l+1)x\big)\bar{A}(2n+1,l)\\
			=&2 \sum_{l=1}^{n} \bar{A}(2n+1,l)\sum_{i=0}^{n-l}(-1)^{n-l}u(n-l,i)\sin^{2i}x\cos x\\
			=&2\sum_{i=0}^{n-1}\sin^{2i}x\cos x\sum_{l=1}^{n-i}(-1)^{n-l}u(n-l,i)\bar{A}(2n+1,l).
		\end{align*}
		
		The relation between $a_{n,i}$ and $b_{n,i}$ follows from 
		\begin{align}\label{derivative-series}
			\sum_{k=-\infty}^{+\infty}\frac{1}{(2k\pi+\alpha)^{2n+1}}=\frac{-1}{2n}\frac{d}{d\alpha}\left( \sum_{k=-\infty}^{+\infty}\frac{1}{(2k\pi+\alpha)^{2n} } \right).
		\end{align}
		Indeed, by Theorem \ref{Mercer-torus}, the left hand side of \eqref{derivative-series} is 
		\begin{align*}
			\sum_{k=-\infty}^{+\infty}\frac{1}{(2k\pi+\alpha)^{2n+1}}&=\frac{1}{(2n)!2^{2n+1}\sin^{2n+1}(\alpha/2)}
			\sum_{l=1}^{2n} \cos\left((n-l+\frac{1}{2})\alpha\right) \overline{A}(2n+1,l)\\
			&=\frac{1}{(2n)!2^{2n}}\sum_{i=0}^{n-1}b_{n,n-i}\frac{\cos(\alpha/2)}{\sin^{2n-2i+1}(\alpha/2)}.
		\end{align*}
		The right hand side of \eqref{derivative-series} is 
		\begin{align}
			\frac{-1}{2n}\frac{d}{d\alpha}\left( \sum_{k=-\infty}^{+\infty}\frac{1}{(2k\pi+\alpha)^{2n} } \right)&=\frac{-1}{2n}\frac{d}{d\alpha}\left(    \frac{1}{2^{2n}(2n-1)!} \sum_{i=0}^{n-1}a_{n,n-i}\frac{1}{\sin^{2n-2i}(\alpha/2)} \right)\\
			&=\frac{1}{(2n)!2^{2n}}\sum_{i=0}^{n-1}(n-i)a_{n,n-i}\frac{\cos(\alpha/2)}{\sin^{2n-2i+1}(\alpha/2)}.
		\end{align}
		The claim follows by comparing the coefficients.
	\end{proof}
	
	\begin{remark}
		It is clear that the coefficients $a_{n,i}$ and $b_{n,i}$  are pure combinatorial and completely independent of the character $\chi \Mod N$.   We will show in  Proposition \ref{pro:A-and-C} that $a_{n, i}$ (hence also $b_{n, i}$) are
		in fact determined by the coefficients $c_{n, i}$ of the Laurent expansion of $\frac{1}{\sin^{2n} x}$  in Lemma \ref{Lemma_Zagier}.
	\end{remark}
	
	\noindent We also have a similar formula for $\zeta(2n)$ in terms of spectral zeta functions of the cycle graphs.
	\begin{theorem} \label{thm:zeta-spectral values}
		For any integers $N\geq 2$ and $n\geq 1$, we have
		\begin{align}
			(N^{2n}-1)\zeta(2n)=\frac{\pi^{2n}}{2(2n-1)!}\sum_{i=1}^{n}a_{n,i} \zeta_{\Z/N\Z}(i),
		\end{align}
		where $a_{n,i}, 1\leq i \leq n$, are given in \eqref{eq:coefficient_a_ni}.
	\end{theorem}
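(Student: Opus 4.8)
The plan is to follow the proof of Theorem \ref{thm:L-to-LN}(i) almost verbatim, simply replacing the weighting by a Dirichlet character with a plain summation over all residue classes. In this sense $\zeta(2n)$ plays the role of the ``trivial weight'' analogue of $L(2n,\chi)$, and the factor $N^{2n}-1$ will emerge from discarding the single residue class $m\equiv 0$, i.e.\ the multiples of $N$. Thus the only genuinely new input, compared to Theorem \ref{thm:L-to-LN}(i), is an arithmetic completion identity; the spectral and combinatorial parts are already available.

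The arithmetic heart of the argument is the identity
\begin{equation}\label{eq:plan-zeta-key}
\sum_{m=1}^{N-1}\sum_{k=-\infty}^{+\infty}\frac{1}{\left(2k\pi+\frac{2m\pi}{N}\right)^{2n}}=\frac{2(N^{2n}-1)}{(2\pi)^{2n}}\zeta(2n).
\end{equation}
To prove \eqref{eq:plan-zeta-key}, I would first observe that $2k\pi+\frac{2m\pi}{N}=\frac{2\pi}{N}(kN+m)$, so the substitution $j=kN+m$ sets up a bijection between pairs $(m,k)$ with $m\in\{1,\dots,N-1\}$, $k\in\Z$, and integers $j$ not divisible by $N$. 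Hence the left-hand side equals $\frac{N^{2n}}{(2\pi)^{2n}}\sum_{N\nmid j}j^{-2n}$. Since $\sum_{j\ge 1,\,N\nmid j}j^{-2n}=(1-N^{-2n})\zeta(2n)$ and the summand is even in $j$, this is $\frac{N^{2n}}{(2\pi)^{2n}}\cdot 2(1-N^{-2n})\zeta(2n)$, which is the right-hand side of \eqref{eq:plan-zeta-key}.

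With \eqref{eq:plan-zeta-key} in hand I would apply Theorem \ref{Mercer-torus} with $\alpha=\frac{2m\pi}{N}$ and exponent $2n$ to each inner series, giving
\[
\sum_{k=-\infty}^{+\infty}\frac{1}{\left(2k\pi+\frac{2m\pi}{N}\right)^{2n}}=\frac{1}{2^{2n}(2n-1)!\sin^{2n}(m\pi/N)}\sum_{l=1}^{2n-1}\cos\!\left(\tfrac{2(n-l)m\pi}{N}\right)\overline{A}(2n,l).
\]
Then I would reuse the purely combinatorial rearrangement already performed in the proof of Theorem \ref{thm:L-to-LN}(i), with $x=m\pi/N$, namely
\[
\sum_{l=1}^{2n-1}\cos\!\left(2(n-l)x\right)\overline{A}(2n,l)=\sum_{i=1}^{n}a_{n,i}\sin^{2(n-i)}x,
\]
where the constant term $(2n-1)!$ is precisely $a_{n,n}$; this identification rests on the special value $t(0,0)=\tfrac12$ together with $\sum_{l=1}^{2n-1}\overline{A}(2n,l)=(2n-1)!$. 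Dividing by $\sin^{2n}(m\pi/N)$ converts the right-hand side into $\sum_{i=1}^{n}a_{n,i}\sin^{-2i}(m\pi/N)$. Substituting back into \eqref{eq:plan-zeta-key}, summing over $m$, and recognizing $\sum_{m=1}^{N-1}\sin^{-2i}(m\pi/N)=\zeta_{\Z/N\Z}(i)$ yields the claimed formula after the elementary simplification $(2\pi)^{2n}/2^{2n}=\pi^{2n}$.

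I expect no serious obstacle, since every analytic and combinatorial ingredient is inherited unchanged from Theorem \ref{Mercer-torus} and Theorem \ref{thm:L-to-LN}(i). The only new step is the lattice-point bijection $j=kN+m$ that proves \eqref{eq:plan-zeta-key}, and the main point requiring care is the bookkeeping of the omitted class $m\equiv 0$: it is exactly this missing class that produces $N^{2n}-1$ in place of $N^{2n}$. One should also verify $a_{n,n}=(2n-1)!$ explicitly, so that the $i=n$ term in the final sum is normalized correctly.
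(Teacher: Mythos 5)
Your proposal is correct, and every step checks out --- but it takes a genuinely different route from the paper's proof. The paper deduces the theorem from Theorem \ref{thm:L-to-LN} used as a black box: it applies \eqref{equ:L-even-values} to the principal character $\chi_{0,d}$ for each divisor $d\mid N$, $d>1$, sums over divisors using the decomposition $\zeta_{\Z/N\Z}(k)=\sum_{d\mid N}L_{\Z/d\Z}(k,\chi_{0,d})$, and then evaluates ${\sum_{d\mid N}}'\, d^{2n}L(2n,\chi_{0,d})=(N^{2n}-1)\zeta(2n)$ via the Euler factorization $L(2n,\chi_{0,d})=\prod_{p\mid d}(1-p^{-2n})\zeta(2n)$ together with the multiplicative identity $\sum_{d\mid N}d^{2n}\prod_{p\mid d}(1-p^{-2n})=N^{2n}$. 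You instead avoid characters and divisor sums altogether: your bijection $j=kN+m$ between pairs $(m,k)$ with $1\le m\le N-1$, $k\in\Z$, and integers $j$ not divisible by $N$ evaluates the total spectral sum as $\frac{2(N^{2n}-1)}{(2\pi)^{2n}}\zeta(2n)$ directly, after which Theorem \ref{Mercer-torus} and the rearrangement $\sum_{l=1}^{2n-1}\cos\left(2(n-l)x\right)\overline{A}(2n,l)=\sum_{i=1}^{n}a_{n,i}\sin^{2(n-i)}x$ from the proof of Theorem \ref{thm:L-to-LN}(i) finish the computation. Both arguments must work around the same obstruction --- the constant weight $1$ on all of $\Z/N\Z$ is not a Dirichlet character, so Theorem \ref{thm:L-to-LN} cannot be quoted with a single $\chi$ --- and they do so differently: the paper arithmetically, you analytically. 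Your route is more self-contained and makes the origin of the factor $N^{2n}-1$ completely transparent (it is exactly the omitted class $m\equiv 0$), while the paper's route is shorter once Theorem \ref{thm:L-to-LN} is available and stays entirely at the level of $L$-functions. Finally, the check you flagged, $a_{n,n}=(2n-1)!$, is genuinely needed and does hold: it follows from $t(0,0)=\tfrac{1}{2}$, $t(k,0)=(-1)^k$ for $k\ge 1$, the symmetry $\overline{A}(2n,l)=\overline{A}(2n,2n-l)$ of Lemma \ref{A-Property}(ii), and $\sum_{l=1}^{2n-1}\overline{A}(2n,l)=(2n-1)!$; the paper itself invokes this fact without proof in Corollary \ref{cor:mean}.
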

	\begin{proof} For each $d\mid N$, $d>1$,  we take the principal character $\chi_{0,d}$ on $\Z/d\Z$ in Theorem \ref{thm:L-to-LN}, we have 
		\[ d^{2n} L(2n, \chi_{0,d})=\frac{\pi^{2n}}{2(2n-1)!}\sum_{i=1}^{n}a_{n,i} L_{\Z/d\Z}(i,\chi_{0,d}).\]
		Noting that 
		\[ \zeta_{\Z/N\Z}(k)=\sum_{d\mid N} L_{\Z/d\Z}(k,\chi_{0,d}),\]
		we obtain 
		\begin{align}
			{\sum_{d\mid N}}' d^{2n} L(2n, \chi_{0,d})=\frac{\pi^{2n}}{2(2n-1)!}\sum_{i=1}^{n}a_{n,i} \zeta_{\Z/N\Z}(i),
		\end{align}
		where the sum $\sum'$ is taken over all divisors $d> 1$ of $N$.
		Therefore the assertion follows from the following equalities:
		\begin{align}
			{\sum_{d\mid N}}' d^{2n} L(2n, \chi_{0,d})=-\zeta(2n)+\sum_{d\mid N} d^{2n} \prod_{p\mid d}(1-\frac{1}{p^{2n}})\zeta(2n)=(N^{2n}-1)\zeta(2n).
		\end{align} 
	\end{proof}

	\noindent The above theorem gives us another way to compute the constants $\{a_{n,i}\}$ in terms of the Laurent expansion coefficients $c_{n, i}$ of $\frac{1}{\sin^{2n} x}$ defined in Lemma \ref{Lemma_Zagier}. Let 
	
	\begin{align*}
		A=\begin{pmatrix}
			a_{1,1} & 0 &\cdots&0 & 0\\
			a_{2,1} & a_{2,2} &\cdots &0& 0\\
			\vdots &\vdots & \ddots &\vdots &\vdots\\
			a_{n-1,1}&a_{n-1,2}&\cdots &a_{n-1,n-1}&0\\
			a_{n,1}&a_{n,2} &\cdots &a_{n,n-1} &a_{n,n}
		\end{pmatrix}
		&
		\mathrm{and\;}C=\begin{pmatrix}
			c_{1,1} & 0 &\cdots&0 & 0\\
			c_{2,1} & c_{2,2} &\cdots &0& 0\\
			\vdots &\vdots & \ddots &\vdots &\vdots\\
			c_{n-1,1}&c_{n-1,2}&\cdots &c_{n-1,n-1}&0\\
			c_{n,1}&c_{n,2} &\cdots &c_{n,n-1} &c_{n,n}
		\end{pmatrix}
	\end{align*}
	be the $ n \times n $ lower triangular matrices given by the constants $\{a_{n,i}\}$ and $\{c_{n,i}\}$ respectively.  We also let $B$ be the $ n \times n $ lower triangular matrix
	
	\begin{align*}
		B=\begin{pmatrix}
			b_{1,1} & 0 &\cdots&0 & 0\\
			b_{2,1} & b_{2,2} &\cdots &0& 0\\
			\vdots &\vdots & \ddots &\vdots &\vdots\\
			b_{n-1,1}&b_{n-1,2}&\cdots &b_{n-1,n-1}&0\\
			b_{n,1}&b_{n,2} &\cdots &b_{n,n-1} &b_{n,n}
		\end{pmatrix}.
	\end{align*}	
	Note that $B=A\cdot \mathrm{diag}\{1, 2, \cdots, n\}$ since $b_{n, i}=i a_{n, i}$.

	\begin{proposition}\label{pro:A-and-C}
		We have 
		\begin{equation}
			A\cdot C=\mathrm{diag}\{1!,3!,\cdots, (2n-1)!\}. 
		\end{equation}
	\end{proposition}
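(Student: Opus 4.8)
The plan is to obtain the matrix identity by composing the two conversion formulas relating the families $\{(N^{2i}-1)\zeta(2i)\}$ and $\{\zeta_{\Z/N\Z}(i)\}$: one is the first formula of Theorem \ref{thm:zeta-spectral values}, and the other follows from Lemma \ref{Lemma_Zagier} together with Euler's evaluation \eqref{equ:Eulerzeta}. First I would rewrite Zagier's formula into a clean shape. Using \eqref{equ:Eulerzeta} one checks $\frac{(-1)^{s-1}2^{2s}B_{2s}}{(2s)!}=\frac{2\zeta(2s)}{\pi^{2s}}$ for every $s\geq 1$, so Lemma \ref{Lemma_Zagier} reads
\[ \zeta_{\Z/N\Z}(i)=-c_{i,0}+2\sum_{s=1}^{i}c_{i,s}\frac{N^{2s}}{\pi^{2s}}\zeta(2s). \]
Specializing this polynomial identity to $N=1$, where $\zeta_{\Z/1\Z}(i)=0$, gives $c_{i,0}=2\sum_{s=1}^{i}c_{i,s}\frac{\zeta(2s)}{\pi^{2s}}$, and reinserting this value yields the clean second formula
\[ \zeta_{\Z/N\Z}(i)=2\sum_{s=1}^{i}c_{i,s}\frac{N^{2s}-1}{\pi^{2s}}\zeta(2s). \]

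Next I would substitute this expression for $\zeta_{\Z/N\Z}(i)$ into Theorem \ref{thm:zeta-spectral values} and interchange the order of summation ($\sum_{i=1}^{n}\sum_{s=1}^{i}=\sum_{s=1}^{n}\sum_{i=s}^{n}$), obtaining
\[ (N^{2n}-1)\zeta(2n)=\frac{\pi^{2n}}{(2n-1)!}\sum_{s=1}^{n}\frac{(N^{2s}-1)\zeta(2s)}{\pi^{2s}}\Bigl(\sum_{i=s}^{n}a_{n,i}c_{i,s}\Bigr). \]
By Euler's formula each $\zeta(2s)$ is a \emph{nonzero} rational multiple of $\pi^{2s}$, so $\frac{(N^{2s}-1)\zeta(2s)}{\pi^{2s}}$ is a nonzero rational multiple of the polynomial $N^{2s}-1$. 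The polynomials $N^{2}-1,N^{4}-1,\dots,N^{2n}-1$ have pairwise distinct degrees and hence are linearly independent as functions of $N$; since the displayed identity holds for all $N\geq 2$, comparing the coefficient of each $N^{2s}-1$ on both sides forces $\sum_{i=s}^{n}a_{n,i}c_{i,s}=0$ for $1\leq s\leq n-1$, while for $s=n$ we get $a_{n,n}c_{n,n}=(2n-1)!$. Using $c_{n,n}=1$ (Lemma \ref{Lemma_Zagier}), this is exactly the assertion that the bottom row of $A\cdot C$ coincides with that of $\mathrm{diag}\{1!,3!,\dots,(2n-1)!\}$, and after reindexing $j=n-i$ it recovers the relation for $a_{n,i}$ stated in Theorem \ref{main-thm-Dirichlet}.

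The key structural remark for finishing is that each entry $a_{r,i}$ and $c_{i,s}$ depends only on its own indices, not on the ambient matrix size; hence the entry $\sum_{i=s}^{r}a_{r,i}c_{i,s}$ of $A\cdot C$ is intrinsic to the pair $(r,s)$. Because Theorem \ref{thm:zeta-spectral values} and the second formula above are valid for every integer $n\geq 1$, I would run the coefficient comparison with $n$ replaced by each $r\in\{1,\dots,n\}$, which gives $\sum_{i=s}^{r}a_{r,i}c_{i,s}=(2r-1)!\,\delta_{r,s}$ for all $s\leq r$. This is precisely $A\cdot C=\mathrm{diag}\{1!,3!,\dots,(2n-1)!\}$.

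I expect the only genuinely non-formal points to be, first, the passage from Zagier's formula to the clean second formula, where the constant term $c_{i,0}$ must be accounted for (handled by the $N=1$ specialization above), and second, the justification of the coefficient comparison: one must note that both formulas are identities valid for infinitely many integers $N$, so that after clearing the common factors both sides become polynomials in $N$, and the linear independence of the $N^{2s}-1$ may legitimately be invoked to separate the coefficients. Everything else is bookkeeping in lower-triangular matrix algebra.
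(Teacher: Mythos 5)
Your proposal is correct and is essentially the paper's own argument: both proofs feed Euler's formula \eqref{equ:Eulerzeta} and Lemma \ref{Lemma_Zagier} into Theorem \ref{thm:zeta-spectral values}, read the result as a polynomial identity in $N$, extract the row relations $\sum_{i=s}^{n}a_{n,i}c_{i,s}=(2n-1)!\,\delta_{n,s}$ by comparing coefficients, and conclude by letting $n$ run over all rows. The only cosmetic difference is that you first eliminate the constant terms $c_{i,0}$ via the $N=1$ specialization of Lemma \ref{Lemma_Zagier} (legitimate, since the empty sum $\zeta_{\Z/1\Z}(i)$ vanishes and the polynomial formula does vanish at $N=1$), in effect deriving \eqref{eq:spectral-zeta-to-zeta} first, whereas the paper skips this and compares only the coefficients of $N^{2s}$ for $s\geq 1$, never needing the constant term at all.
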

	\begin{proof}
		Using Euler's formula on even zeta values \eqref{equ:Eulerzeta} and Lemma \ref{Lemma_Zagier},  we have 
		\begin{align}
			&(N^{2n}-1)\frac{(-1)^{n+1}2^{2n}B_{2n}}{2n}=\sum_{i=0}^{n-1}a_{n,n-i}\sum_{s=0}^{n-i}\frac{(-1)^{s+1}2^{2s}B_{2s}}{(2s)!}c_{n-i,s}N^{2s}\\
			&=-\sum_{i=0}^{n-1}a_{n,n-i}c_{n-i,0}+\sum_{s=1}^{n} \frac{(-1)^{s+1}2^{2s}B_{2s}}{(2s)!} N^{2s}\sum_{i=0}^{n-s}a_{n,n-i}c_{n-i,s}. \label{equ:A-to-C}
		\end{align}
		Since both sides are polynomials in $N$,  by comparing the coefficients of $N^{2s}$, we obtain
		\[  \sum_{i=0}^{n-s}a_{n,n-i}c_{n-i,s}=\begin{cases*}
			(2n-1)! & if $s=n$,\\
			0 & if $1\leq s \leq n-1$.
		\end{cases*}\]
		The conclusion follows, since Theorem \ref{thm:zeta-spectral values} holds for any $n$.
	\end{proof}
	\vskip 3mm
	
	\noindent As a consequence, we may also write the special values of $L_{\Z/N\Z}(s,\chi)$ in terms of those of Dirichlet $L$-function.

	\begin{proposition}\label{spectral-to-zeta-value}
		(i)For an even character $\chi \Mod N$ and an integer $n\geq 1$, we have
		\begin{equation}\label{eq:graph-to-Dirichlet}
			L_{\Z/N\Z}(n,\chi)=2\sum_{i=1}^{n}c_{n,i}\frac{N^{2i}}{\pi^{2i}}L(2i,\chi)
		\end{equation}
		and in particular,
		\begin{equation}\label{eq:spectral-zeta-to-zeta}
			\zeta_{\Z/N\Z}(n)=2\sum_{i=1}^{n}c_{n,i}\frac{(N^{2i}-1)}{\pi^{2i}}\zeta(2i).
		\end{equation}
		(ii)  For an odd character $\chi\Mod N$ and an integer $n\geq 1$, we have
		\begin{equation}
			\widetilde{L}_{\Z/N\Z}(n,\chi)=\frac{2}{n}\sum_{i=1}^{n}i c_{n,i}\frac{N^{2i+1}}{\pi^{2i+1}}	L(2i+1,\chi).
		\end{equation}
	\end{proposition}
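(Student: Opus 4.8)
The plan is to read the two formulae in Theorem \ref{thm:L-to-LN} as linear systems that express the classical $L$-values in terms of the graph $L$-values, and then to invert these systems using the matrix identity $A\cdot C=\mathrm{diag}\{1!,3!,\cdots,(2n-1)!\}$ of Proposition \ref{pro:A-and-C}. Everything reduces to bookkeeping of diagonal scalar factors.

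For part (i), I fix an even character $\chi\Mod N$ and an integer $n\geq 1$, and collect the values into column vectors $\mathbf{L}=(L(2,\chi),\ldots,L(2n,\chi))^{T}$ and $\mathbf{L}_{G}=(L_{\Z/N\Z}(1,\chi),\ldots,L_{\Z/N\Z}(n,\chi))^{T}$. Then the $n$ instances $1\leq m\leq n$ of \eqref{equ:L-even-values} read $\mathbf{L}=DA\mathbf{L}_{G}$, where $A$ is the lower-triangular matrix of Proposition \ref{pro:A-and-C} and $D=\mathrm{diag}\{\frac{\pi^{2m}}{2(2m-1)!N^{2m}}\}_{m=1}^{n}$. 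Writing $\Gamma:=\mathrm{diag}\{1!,3!,\cdots,(2n-1)!\}$, the identity $AC=\Gamma$ together with $c_{m,m}=1$ (Lemma \ref{Lemma_Zagier}) forces $a_{m,m}=(2m-1)!\neq 0$, so $A$ is invertible with $A^{-1}=C\Gamma^{-1}$. Hence $\mathbf{L}_{G}=C\Gamma^{-1}D^{-1}\mathbf{L}$, and multiplying the diagonal matrices gives $\Gamma^{-1}D^{-1}=\mathrm{diag}\{\frac{2N^{2i}}{\pi^{2i}}\}_{i=1}^{n}$. Reading off the $n$-th row yields exactly \eqref{eq:graph-to-Dirichlet}. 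For the special value $\zeta_{\Z/N\Z}(n)$ in \eqref{eq:spectral-zeta-to-zeta}, I would apply \eqref{eq:graph-to-Dirichlet} to the principal character $\chi_{0,d}$ for each divisor $d>1$ of $N$, sum over such $d$ using $\zeta_{\Z/N\Z}(n)=\sum_{d\mid N}L_{\Z/d\Z}(n,\chi_{0,d})$ (the $d=1$ term being an empty sum), and substitute the identity ${\sum_{d\mid N}}'d^{2i}L(2i,\chi_{0,d})=(N^{2i}-1)\zeta(2i)$ already established in the proof of Theorem \ref{thm:zeta-spectral values}; alternatively, the same inversion applied directly to Theorem \ref{thm:zeta-spectral values} produces \eqref{eq:spectral-zeta-to-zeta} in one step.

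Part (ii) is identical in spirit, now starting from \eqref{equ:L-odd-values} and using $B=A\cdot\mathrm{diag}\{1,2,\ldots,n\}$. Writing the odd values as $\mathbf{L}^{\mathrm{odd}}=\widetilde{D}B\widetilde{\mathbf{L}}_{G}$ with $\widetilde{D}=\mathrm{diag}\{\frac{\pi^{2m+1}}{(2m)!N^{2m+1}}\}_{m=1}^{n}$, and using $B^{-1}=\mathrm{diag}\{1,\ldots,n\}^{-1}C\Gamma^{-1}$, the combined diagonal becomes $\Gamma^{-1}\widetilde{D}^{-1}=\mathrm{diag}\{\frac{2iN^{2i+1}}{\pi^{2i+1}}\}_{i=1}^{n}$ (since $\tfrac{(2m)!}{(2m-1)!}=2m$), while the factor $\mathrm{diag}\{1,\ldots,n\}^{-1}$ scales the $n$-th row by $1/n$. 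Reading off that row gives $\widetilde{L}_{\Z/N\Z}(n,\chi)=\frac{2}{n}\sum_{i=1}^{n}ic_{n,i}\frac{N^{2i+1}}{\pi^{2i+1}}L(2i+1,\chi)$, as claimed.

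The computations are routine, so I do not expect a genuine obstacle; the only point requiring care is the consistent tracking of the diagonal scalar factors, namely the powers of $N$ and $\pi$, the distinction between the factorials $(2m-1)!$ and $(2m)!$, and in part (ii) the interplay between the factor $i$ arising from $\Gamma^{-1}\widetilde{D}^{-1}$ and the factor $1/n$ coming from $\mathrm{diag}\{1,\ldots,n\}^{-1}$. I would also note explicitly that all these matrix identities are compatible under truncation, since $a_{n,i}$, $b_{n,i}$ and $c_{n,i}$ are independent of the ambient size $n$, so the inversions hold simultaneously for every $n$ and the entrywise formulae follow for all $n\geq 1$.
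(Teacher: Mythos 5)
Your proposal is correct and follows essentially the same route as the paper: it recasts Theorem \ref{thm:L-to-LN} (and Theorem \ref{thm:zeta-spectral values}) as a lower-triangular linear system, inverts $A$ via the identity $A\cdot C=\mathrm{diag}\{1!,3!,\cdots,(2n-1)!\}$ of Proposition \ref{pro:A-and-C}, and handles the odd case through $B=A\cdot\mathrm{diag}\{1,2,\cdots,n\}$, exactly as the paper does. You merely make explicit the diagonal bookkeeping (the factors $D$, $\widetilde{D}$, $\Gamma$) that the paper absorbs into its left-hand vector and leaves to the reader.
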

	\begin{proof}
		(i) Note that Theorem \ref{thm:L-to-LN} could also be written as a matrices product
		\begin{align}
			&\big{(}  \frac{2\cdot 1! N^2}{\pi^2}L(2,\chi),  \frac{2 \cdot3! N^4}{\pi^4}L(4,\chi), \cdots, \frac{2\cdot (2n-1)! N^{2n}}{\pi^{2n}}L(2n,\chi) \big{)}^T \notag\\
			=&A\cdot \big{(}{L_{\Z/NZ}(1,\chi)},{L_{\Z/NZ}(2,\chi)}, \cdots, {L_{\Z/NZ}(n,\chi)}\big{)}^T.
		\end{align}
		Then the claims follow from the computation of $A^{-1}$ in Proposition  \ref{pro:A-and-C} and Theorem \ref{thm:zeta-spectral values}.
		The proof of (ii) is similar to the above by noting that  
		$$B^{-1}=\mathrm{diag}\{1, \frac{1}{2}, \cdots, \frac{1}{n}\} \cdot A^{-1}=  \mathrm{diag}\{1, \frac{1}{2}, \cdots, \frac{1}{n}\} \cdot C \cdot \mathrm{diag}\{1, \frac{1}{3!}, \cdots, \frac{1}{(2n-1)!}\}.$$
	\end{proof}
	\begin{remark}
		Note that  the equality \eqref{eq:spectral-zeta-to-zeta} is just another formulation of Lemma \ref{Lemma_Zagier} except the constant term with respective the variable $N$.
	\end{remark}
	As an analogue of Zagier's formula for the Verlinde number in  Lemma \ref{Lemma_Zagier}, we have the following results:
	\begin{corollary}\label{D-to-G}
		\begin{itemize}
			\item[(i)]For an even primitive character $\chi\Mod N$ and an integer $n\geq 1$, we have
			\begin{align}
				L_{\Z/N\Z}(n,\chi)=\sum_{j=1}^{N}\frac{\bar{\chi}(j)}{G(\bar{\chi})} \sum_{i=1}^{n} \frac{(-1)^{i-1}2^{2i}{\mathbf{B}}_{2i}(\frac{j}{N})}{(2i)!}  c_{n,i}N^{2i},
			\end{align}
			where $\mathbf{B}_{k}(x)$ is the $k$-th Bernoulli polynomial and $G(\bar{\chi})$ is the Gauss sum associated to $\bar{\chi}$.
			\item[(ii)]For an odd primitive character $\chi\Mod N$ and an integer $n\geq 1$, we have
			\begin{align}
				\tilde{L}_{\Z/N\Z}(n,\chi)=\frac{1}{n}\sum_{j=1}^{N}\frac{\bar{\chi}(j)}{G(\bar{\chi})} \sum_{i=1}^{n} \frac{(2\sqrt{-1})^{2i+1}{\mathbf{B}}_{2i+1}(\frac{j}{N})}{(2i+1)!}  ic_{n,i}N^{2i+1}.
			\end{align}
		\end{itemize}
	\end{corollary}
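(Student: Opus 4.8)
The plan is to derive Corollary \ref{D-to-G} by feeding the classical closed form of Leopoldt \eqref{Leopoldt} into the already-established Proposition \ref{spectral-to-zeta-value}. Nothing new is needed beyond these two inputs together with the standard reflection identity for Gauss sums; the entire argument is a substitution followed by careful bookkeeping of the prefactors $2$, $\pi$ and $\i$.

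First, for part (i), I would start from the even-character identity
\[ L_{\Z/N\Z}(n,\chi)=2\sum_{i=1}^{n}c_{n,i}\frac{N^{2i}}{\pi^{2i}}L(2i,\chi) \]
of Proposition \ref{spectral-to-zeta-value}(i). Since $\chi$ is even we have $\chi(-1)=1=(-1)^{2i}$, so the parity hypothesis of \eqref{Leopoldt} is met for every exponent $2i$, and Leopoldt's formula applies verbatim to each value $L(2i,\chi)$. Substituting it in, I would simplify $(2\pi\i)^{2i}/\pi^{2i}=(-1)^{i}2^{2i}$, absorb the leading factor $2$ against the $\tfrac12$ in \eqref{Leopoldt}, and combine $-\chi(-1)(-1)^i=(-1)^{i-1}$. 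The one conceptual step is replacing the Gauss sum $G(\chi)$ in the numerator by $1/G(\bar\chi)$: this is exactly the reflection identity $G(\chi)G(\bar\chi)=\chi(-1)N$, which for an even character reads $G(\chi)/N=1/G(\bar\chi)$. Interchanging the order of the $i$- and $j$-summations then yields the stated expression.

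Part (ii) runs in parallel, starting instead from
\[ \widetilde{L}_{\Z/N\Z}(n,\chi)=\frac{2}{n}\sum_{i=1}^{n}ic_{n,i}\frac{N^{2i+1}}{\pi^{2i+1}}L(2i+1,\chi). \]
Here $\chi$ is odd, so $\chi(-1)=-1=(-1)^{2i+1}$, and again the parity condition in \eqref{Leopoldt} holds for each odd exponent $2i+1$. After substituting Leopoldt's formula I would factor $(2\pi\i)^{2i+1}/\pi^{2i+1}=(2\i)^{2i+1}$, cancel the prefactor $2$ against the $\tfrac12$, retain the extra weight $ic_{n,i}$ and the global $1/n$ coming from Proposition \ref{spectral-to-zeta-value}(ii), and invoke the odd-character reflection identity $G(\chi)G(\bar\chi)=\chi(-1)N=-N$ to trade $G(\chi)$ for $G(\bar\chi)$. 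Collecting terms and swapping the summations produces the claimed identity for $\widetilde{L}_{\Z/N\Z}(n,\chi)$.

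I do not expect any genuine obstacle: both theorem-level ingredients are in hand and the computation is mechanical. The one place demanding care is the tracking of the signs and powers of $\i$ through the two parity regimes — in particular applying the Gauss-sum reflection identity with the correct value of $\chi(-1)$ — together with the standing hypothesis that $\chi$ be \emph{primitive}, which is precisely what makes both \eqref{Leopoldt} and the evaluation $|G(\chi)|^{2}=N$ available in the first place.
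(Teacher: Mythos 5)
Your route is exactly the paper's own: substitute Leopoldt's formula \eqref{Leopoldt} into Proposition \ref{spectral-to-zeta-value} and then trade $G(\chi)$ for $G(\bar\chi)$ via the Gauss-sum reflection identity. For part (i) this works precisely as you describe: for an even character $G(\chi)G(\bar\chi)=\chi(-1)N=N$, so $G(\chi)/N=1/G(\bar\chi)$, and your bookkeeping $(2\pi\i)^{2i}/\pi^{2i}=(-1)^i2^{2i}$, $-\chi(-1)(-1)^i=(-1)^{i-1}$ lands exactly on the stated formula.

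Part (ii) is where your write-up has a genuine inconsistency. You correctly invoke $G(\chi)G(\bar\chi)=\chi(-1)N=-N$ for odd $\chi$, which gives $G(\chi)/N=-1/G(\bar\chi)$; carried honestly through the substitution (note that for odd $\chi$ the factor $-\chi(-1)$ in \eqref{Leopoldt} equals $+1$, so no sign appears there to compensate), this yields
\begin{equation*}
\widetilde{L}_{\Z/N\Z}(n,\chi)=-\frac{1}{n}\sum_{j=1}^{N}\frac{\bar{\chi}(j)}{G(\bar{\chi})}\sum_{i=1}^{n}\frac{(2\i)^{2i+1}\mathbf{B}_{2i+1}(\tfrac{j}{N})}{(2i+1)!}\,i\,c_{n,i}N^{2i+1},
\end{equation*}
the \emph{negative} of the printed identity, yet you assert that collecting terms ``produces the claimed identity.'' One of these two signs must give way, and in fact it is the printed statement that is at fault: the paper's proof converts $G(\chi)$ into $G(\bar\chi)$ by quoting ``$G(\bar\chi)G(\chi)=N$,'' which is false for odd characters. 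A concrete check with $N=4$, $\chi=\chi_4$, $n=1$: directly from the definition, $\widetilde{L}_{\Z/4\Z}(1,\chi_4)=\frac{\cot(\pi/4)}{\sin^{2}(\pi/4)}-\frac{\cot(3\pi/4)}{\sin^{2}(3\pi/4)}=2+2=4$, while the right-hand side of the printed formula equals
\begin{equation*}
\frac{(2\i)^{3}\cdot 4^{3}}{3!\cdot G(\chi_4)}\bigl(\mathbf{B}_3(\tfrac14)-\mathbf{B}_3(\tfrac34)\bigr)=\frac{(-8\i)\cdot 64}{6\cdot 2\i}\cdot\frac{3}{32}=-4,
\end{equation*}
using $G(\chi_4)=2\i$, $\mathbf{B}_3(\tfrac14)=-\mathbf{B}_3(\tfrac34)=\tfrac{3}{64}$, $c_{1,1}=1$. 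So the steps you outline are sound and more careful than the paper's, but they prove part (ii) with an overall factor $\chi(-1)=-1$; a complete solution had to flag this sign correction to the statement rather than claim the printed formula drops out.
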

	\begin{proof}
		Combining the identities  \eqref{Leopoldt} with \eqref{eq:graph-to-Dirichlet}, we obtain the assertions by noting that $G(\bar{\chi})G({\chi})=N$.
	\end{proof}
	
	\vskip 3mm

	\noindent   In the end of this section, we give some corollaries of  Theorem ~\ref{thm:L-to-LN}. As we mentioned before, our main Theorems \ref{thm:zeta-values} and 
	\ref{thm:L-to-LN} hold for any characters which are not necessarily primitive.  This makes it easier to handle some analytic number theory calculations. As an example, we give 
	the following simple results on the mean value of $L(n,\chi)$ as a direct consequence. We point out that one can get higher order power mean values of $L(n,\chi)$ similarly.
	
	\begin{corollary}\label{cor:mean}
		For an integer $n\geq 1$, we have
		\begin{equation}
			\lim\limits_{N\to \infty}\frac{2}{\varphi(N)}\sum_{\chi(-1)=(-1)^n} L(n,\chi)=1,
		\end{equation}
		where  the sum runs over all  Dirichlet characters of $\Z/N\Z$ with $\chi(-1)=(-1)^n$ and $\varphi(N)$ is the Euler $\phi$-function of $N$.
	\end{corollary}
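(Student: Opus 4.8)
The plan is to start from the explicit formula for $L(n,\chi)$ provided by Theorem \ref{thm:zeta-values} and sum over all characters of the prescribed parity, exploiting the orthogonality of Dirichlet characters to collapse the character sum to a single term. Concretely, for $n\geq 2$ and characters with $\chi(-1)=(-1)^n$, Theorem \ref{thm:zeta-values} gives
\[
L(n,\chi)=\frac{\pi^n}{2(n-1)!N^n}\sum_{m=1}^{N}\frac{\chi(m)}{\sin^n(m\pi/N)}\sum_{l=1}^{n-1}\cos\!\big(\tfrac{(n-2l)m\pi}{N}\big)\bar{A}(n,l),
\]
and the $n=1$ case uses the companion formula \eqref{chi1}. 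Since the inner double sum over $m$ and $l$ is a fixed function of $m$ (independent of $\chi$), interchanging the order of summation isolates the quantity $\sum_{\chi(-1)=(-1)^n}\chi(m)$. First I would record the orthogonality relation: summing over all characters of given parity picks out $m\equiv \pm 1\pmod N$, that is, $\sum_{\chi(-1)=(-1)^n}\chi(m)=\tfrac{1}{2}\varphi(N)\big(\mathbf 1_{m\equiv 1}+(-1)^n\mathbf 1_{m\equiv -1}\big)$, which for $m\in\{1,\dots,N\}$ contributes only at $m=1$ and $m=N-1$.

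Next I would substitute this back. After multiplying by $2/\varphi(N)$, the only surviving terms come from $m=1$ and $m=N-1$. Because $\sin^n((N-1)\pi/N)=(-1)^n\sin^n(\pi/N)$ and the cosine factor is symmetric under $m\mapsto N-m$, the parity condition makes the two boundary contributions add coherently, so that
\[
\frac{2}{\varphi(N)}\sum_{\chi(-1)=(-1)^n}L(n,\chi)=\frac{\pi^n}{(n-1)!N^n}\cdot\frac{1}{\sin^n(\pi/N)}\sum_{l=1}^{n-1}\cos\!\big(\tfrac{(n-2l)\pi}{N}\big)\bar{A}(n,l),
\]
with the analogous $\cot(\pi/N)$ expression when $n=1$. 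The right-hand side is now completely explicit and character-free, so the problem reduces to a limit computation as $N\to\infty$.

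The final step is the asymptotic analysis. As $N\to\infty$ one has $\sin(\pi/N)\sim \pi/N$, so $N^{-n}\sin^{-n}(\pi/N)\to \pi^{-n}$, while every cosine factor $\cos((n-2l)\pi/N)\to 1$. Hence the right-hand side tends to $\frac{1}{(n-1)!}\sum_{l=1}^{n-1}\bar{A}(n,l)$, and by the normalization identity $\sum_{l=1}^{n-1}\bar{A}(n,l)=(n-1)!$ already used in the proof of Theorem \ref{thm:L-to-LN}, this limit equals $1$. (For $n=1$ the sum is empty but the $\cot(\pi/N)$ formula gives $\tfrac{\pi}{2N}\cot(\pi/N)\to\tfrac12$, and combined with the factor $2$ yields $1$ as well.) The main obstacle I anticipate is bookkeeping rather than conceptual: one must handle the $m=N-1$ boundary term carefully, making sure the sign $(-1)^n$ from $\sin^n((N-1)\pi/N)$ matches the parity hypothesis so the two surviving terms reinforce instead of cancel, and one must verify that the total count $\sum_l \bar A(n,l)$ is taken over exactly the range $1\le l\le n-1$ that appears in the formula. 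Once the boundary terms are aligned correctly, the limit is immediate from the elementary estimate $\sin x\sim x$.
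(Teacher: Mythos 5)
Your strategy is sound and, in outline, parallels the paper's own proof: both arguments sum an explicit character formula for $L(n,\chi)$ over all characters of fixed parity, invoke exactly the orthogonality relations you record (the paper displays the same two even/odd character sums), and then let $N\to\infty$ using $\sin(\pi/N)\sim\pi/N$. The difference is which explicit formula is fed in. You substitute Theorem \ref{thm:zeta-values} (the Eulerian-number/cosine expression), so your limit rests on the normalization $\sum_{l=1}^{n-1}\overline{A}(n,l)=(n-1)!$ together with all cosine factors tending to $1$. The paper instead substitutes Theorem \ref{thm:L-to-LN}: orthogonality collapses each sum of $L_{\Z/N\Z}(i,\chi)$ (resp.\ $\widetilde{L}_{\Z/N\Z}(i,\chi)$) to a single $\sin^{-2i}(\pi/N)$ term, and the limit is then governed by the one dominant coefficient $a_{n,n}=(2n-1)!$ (resp.\ $b_{n,n}=n\cdot(2n-1)!$), all lower-order terms being $O(N^{-2})$. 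Since Theorem \ref{thm:L-to-LN} is itself derived from Theorem \ref{thm:zeta-values}, the two routes are equivalent in content; yours is marginally more self-contained and treats both parities through one formula, while the paper's version fits its section's theme of routing everything through the cycle-graph $L$-values.

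One local error you should fix, although it does not affect your conclusion: the symmetry bookkeeping for the $m=N-1$ term has the two signs interchanged. In fact $\sin\big((N-1)\pi/N\big)=\sin(\pi-\pi/N)=\sin(\pi/N)$ with no factor $(-1)^n$, so the sine factor \emph{is} symmetric under $m\mapsto N-m$; it is the cosine factor that is not: $\cos\big((n-2l)(N-m)\pi/N\big)=\cos\big((n-2l)\pi-(n-2l)m\pi/N\big)=(-1)^{n}\cos\big((n-2l)m\pi/N\big)$. Thus the $(-1)^n$ that cancels against the $(-1)^n$ coming from the parity orthogonality relation is produced by the cosine, not by $\sin^n$. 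Because you made both swaps simultaneously, the net sign — hence your displayed character-free formula, the coherent reinforcement of the $m=1$ and $m=N-1$ terms, and the final limit — is correct, but as written each of the two intermediate claims is false and should be corrected before the argument is complete.
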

	
	\begin{proof}
		First note that the following identities
		\begin{align*}
			\sum_{\chi\; \mathrm{even}} \chi(m)=\begin{cases}
				\varphi(N)/2 & \mathrm{if } \; m \equiv \pm 1 \mod N\\
				0 & \mathrm{otherwise,}
			\end{cases}
		\end{align*}
		and
		\begin{align*}
			\sum_{\chi\; \mathrm{odd}} \chi(m)=\begin{cases}
				\varphi(N)/2 & \mathrm{if } \; m \equiv  1 \mod N\\
				-\varphi(N)/2 & \mathrm{if } \; m \equiv  -1 \mod N\\
				0 & \mathrm{otherwise}.
			\end{cases}
		\end{align*}
		Combining with the formulae \eqref{equ:L-even-values}  and \eqref{equ:L-odd-values} in the above Theorem, 	we have 
		\begin{equation}
			\frac{2}{\varphi(N)}\sum_{\chi\; \mathrm{even}} L(2n,\chi)=\frac{\pi^{2n}}{(2n-1)! N^{2n}}\sum_{i=0}^{n-1}\frac{a_{n,n-i}}{\sin^{2n-2i}(\pi/N)}
		\end{equation}
		and 
		\begin{equation}
			\frac{2}{\varphi(N)}\sum_{\chi\; \mathrm{odd}} L(2n+1,\chi)=\frac{2\pi^{2n+1}}{(2n)! N^{2n+1}}\sum_{i=0}^{n-1}\frac{b_{n,n-i}\cos(\pi/N)}{\sin^{2n-2i+1}(\pi/N)}.
		\end{equation}
		The assertions follow from the facts that $a_{n,n}=(2n-1)!, b_{n,n}=n\cdot(2n-1)!$ and $\sin (\frac{\pi}{N}) \sim \frac{\pi}{N}$ as $N \to \infty$.
		
		For the case $\chi(-1)=-1$ and $n=1$, by using formula \eqref{chi1},  we similarly get 
		\begin{equation}
			\frac{2}{\varphi(N)}\sum_{\chi\; \mathrm{odd}} L(1,\chi)=\frac{\pi}{N}\cot(\frac{\pi}{N}),
		\end{equation}
		which clearly goes to 1 as $N\to \infty$.
	\end{proof}
	
	\section{Concluding remarks}
	Here are a few possible interesting topics related to the materials discussed in this paper for further investigations:\\
	
	\noindent \textit{Evaluation of the cycle graph $L$-function.\footnote{After the first version of this paper was posted on arXiv at the end of the year 2022, this problem has been solved for primitive characters very recently 
	by Jorgenson, Karlsson and Smajlovi\'c  in \cite{JKS}.}} 
	In Corollary \ref{D-to-G}, we show, for  a primitive character,  how to get the special values of $L_{\Z/N\Z}(n, \chi)$	 from those of 
	$L(n, \chi)$.  Note that $L_{\Z/N\Z}(n, \chi)$ is a finite sum of elementary trigonometric functions twisted with a character.  It is natural to ask for a direct way to calculate $L_{\Z/N\Z}(n, \chi)$ instead of using the known formulae of Dirichlet $L$-functions.	
	If so, then we aslo find another way to evaluate special values of Dirichlet $L$-functions.
	\vskip 5mm
	\noindent \textit{Relations with Iwasawa theory.} It is well known that the special values of Dirichlet $L$-functions determine the $p$-adic $L$-function by interpolation.  Then,  given the fact that these values are completely determined by
	the values of $L_{\Z/N\Z}(s, \chi)$ as showed in Theorem \ref{thm:L-to-LN},  what is the  $p$-adic analogue of $L_{\Z/N\Z}(n, \chi)$ and what is its relation with the $p$-adic $L$-function in Iwasawa theory?	
	\vskip 5mm
	\noindent \textit{Linear independence of half integer values (over  $\Q$).} For an odd integer $n$,  we recall that   $\zeta_{\Z/N\Z}(\frac{n}{2})=  \sum_{m=1}^{N-1} \frac{1}{\sin^{n}(\frac{m\pi}{N})}. $	
	Does the following set
	$$\Big{\{}\zeta_{\Z/N\Z}(\frac{1}{2}),  \zeta_{\Z/N\Z}(\frac{3}{2}), \cdots,  \zeta_{\Z/N\Z}(\frac{2N+1}{2}) \Big{\}}$$	
	tend to span as large dimension as possible over $\Q$?  This might be viewed as an analogue question of the wildly believed conjecture that 
	$\zeta(3), \zeta(5), \cdots, \zeta(2n+1), \cdots ,$ are linearly independent over $\Q$.
	
	\vskip 5mm
	\noindent \textit{Higher dimensional graph torus.}  Theorem \ref{thm:zeta-spectral values} establishes a relation between special values of univariate zeta function with those of spectral zeta of the one dimensional torus $C_N$. 
	Do we have analogue results of Theorem \ref{thm:zeta-spectral values}  for the higher dimension torus $(\Z/N\Z)^d$? Is there any relation between special values of the spectral zeta function of a graph torus and of an Epstein zeta function?\\
	Of course, one can ask similar questions for more general manifolds (and with corresponding graphs) $\cdots$.
	\vskip 7mm
	
	\noindent 
	{\bf Acknowledgements.} 
	We would like to thank Anders Karlsson, Dylan M{\"u}ller, Daqing Wan, Jun Wang and Yichao Zhang for helpful discussions.  
	
	A part of this work was done during the last two named authors' stay at the Grand Rezen Golden Bay Hotel, Weihai. They enjoyed their stay in the hotel very much and thank all the staff, especially,  Mrs. Zou, for their hospitality and 
	for providing excellent working conditions.


\begin{thebibliography}{99}
		\bibitem{Apostol} T.M. Apostol, {\it Introduction to analytic number theory}, Springer, 1976.
		
		\bibitem{Bona} M. B$\acute{\rm o}$na,
		{\it Combinatorics of Permutations, (second edition)}, CRC Press, 2012.
		
		\bibitem{CJK} C.  Chinta, J. Jorgenson and A. Karlsson,	
		Zeta functions, heat kernels, and spectral asymptotics on degenerating families of discrete tori,
		Nagoya Math. J., 198 (2010), 121-172. 		
		
		\bibitem{Courant-Hilbert} R. Courant and D. Hilbert,
		{\it Methods of mathematical physics (Vol. 1)},
		John Wiley and Sons, New York, 1989.
		
		\bibitem{Dav} H. Davenport,
		{\it Multiplicative number theory (Third edition)},
		Graduate Texts in Mathematics, 74. Springer-Verlag, New York, 2000.
		
		\bibitem{Friedli} F. Friedli,			
		A functional relation for L-functions of graphs equivalent to the Riemann hypothesis for Dirichlet L-functions,
		J. Number Theory 169 (2016), 342-352. 		
		
		\bibitem{Fri-Karl} F. Friedli and A. Karlsson,		
		Spectral zeta functions of graphs and the Riemann zeta function in the critical strip,
		Tohoku Math. J. (2) 69 (2017), no. 4, 585-610. 		
		
		\bibitem{Hecke} E. Hecke,
		Analytische Arithmetik der positiven quadratischen Formen (German),
		Danske Vid. Selsk. Math.-Fys. Medd. 17 (1940), no. 12, 134.
		
		\bibitem{JKS} J. Jorgenson, A. Karlsson and L. Smajlovi\'c,
		The resolvent kernel on the discrete circle and twisted cosecant
		sums,  arXiv2305.00202.
		
		
\bibitem{KN06} A. Karlsson and M. Neuhauser, Heat kernels, theta identities, and zeta functions on cyclic groups,  In: {\it Topological and Asymptotic Aspects of Group Theory},
	R. Grigorchuk, M. Mihalik, M. Sapir, Z. Sunik (eds.),
	Contemporary Mathematics, 394 (2006) pp. 177-189
		
		\bibitem{Leop} H.-W. Leopoldt,
		Eine Verallgemeinerung der Bernoullischen Zahlen (German),
		Abh. Math. Sem. Univ. Hamburg 22 (1958), 131-140.
		
		\bibitem{Lou} S. Louboutin,
		Quelques formules exactes pour des moyennes de fonctions L de Dirichlet (French),
		Canan. Math. Bull. 36 (1993), no. 2, 190-196.	
		
		

		
		
		\bibitem{Weil} A. Weil,
		{\it Elliptic Functions according to Eisenstein and Kronecker},
		Springer-Verlag, Berlin Heidelberg New York, 1976.
		
		\bibitem{xzz} B. Xie, Y.G. Zhao and Y.Q. Zhao,
		Special values of spectral zeta functions and combinatorics: Sturm-Liouville problems,
		arXiv:2303.10004.
		
		
		\bibitem{Zagier}D. Zagier,
		Elementary aspects of the Verlinde formula and of the Harder-Narasimhan-Atiyah-Bott formula,
		Israel Mathematical Conference Proceedings 9 (1996) 445-462.
	\end{thebibliography}
\end{document}